\DeclareMathOperator{\dist}{dist}
\DeclareMathOperator{\sech}{sech}
\DeclareMathOperator{\arcsech}{arcsech}
\newtheorem{theorem}{Theorem}[section]
\newtheorem{corollary}[theorem]{Corollary}
\newtheorem{lemma}[theorem]{Lemma}
\title[Domain wall system]
{Variational approach for the singular perturbation domain wall  system} 
\author[Javier Monreal and Micha{\l} Kowalczyk]{}
\subjclass{35J50, 35J47, 35Q56, 35Q92}
\keywords{Domain walls, amplitude equation, Painlev\'e equation}
\thanks{The first author was supported by FONDECYT grant 1210405 and ANID doctoral fund 21242106. The second author was supported by   research grants FONDECYT 1210405 and ANID projects ACE210010 and FB210005.}
\begin{document}
\maketitle

\centerline{\scshape
Javier Monreal $^{{\href{mailto:jmonreal@dim.uchile.cl}{\textrm{\Letter}}}}$
and Micha{\l} Kowalczyk $^{{\href{mailto:kowalczy@dim.uchile.cl}{\textrm{\Letter}}}}$}

\medskip

{\footnotesize
 \centerline{Departamento de Ingeniería Matemática}
 \centerline{and Centro de Modelamiento Matemático
(UMI 2807 CNRS)}
\centerline{Universidad de Chile, Casilla 170 Correo 3, Santiago, Chile}
} 

\medskip


\bigskip



\begin{abstract}
In this article we study a coupled system of differential equations with Allen-Cahn type non-linearity. Motivated by physical phenomena one of the unknowns in the system is accompanied by a singular perturbation parameter $\varepsilon^2$. By employing variational techniques, we establish the existence of solutions for all values of $\varepsilon$ and get results on their qualitative properties, including regularity. Additionally, we analyse the behaviour of solutions as $\varepsilon \to 0$, demonstrating their pointwise convergence to the solution of the problem for $\varepsilon=0$. We establish the uniqueness of this solution modulo translations. Additionally, in the final section, through an appropriate change of scale, we relate this problem and the second  Painlevé  equation.
\end{abstract}


\section{Introduction}

Domain walls are usual defects with a well-known origin in the theory of media with a vectorial local-order parameter, such as magnetics \cite{DynTopMag}, ferroelectrics \cite{Ferroelectric}, and liquid crystals \cite{ThePhysicsOfLiquidCrystals}. Significant examples of the appearance of this defect are also found in the study of non-linear systems in the modelling of reaction-diffusion problems. For instance, it can be observed that in patterns on the surface of thermal convection layers  \cite{Cross}, \cite{Pismen} the domain walls appear as the transition between different sets of stripes. The usual model to address this problem involves the use of amplitude equations, resulting in a system of coupled Ginzburg-Landau (GL) equations:
\[
\begin{aligned}
 \frac{\partial u}{\partial t}&= D_1\frac{\partial^2u}{\partial x^2}+u(1-u^2-\mu v^2),\\
 \frac{\partial v}{\partial t}&= D_2\frac{\partial^2v}{\partial x^2}+v(1-v^2-\mu u^2).
\end{aligned}
\]
Stationary domain walls satisfy 
\begin{equation}
\label{eq:dw stationary}
\begin{aligned}
D_1u''+u(1-u^2-\mu v^2)&=0,\\
 D_2 v''+v(1-v^2-\mu u^2)&=0,
\end{aligned}
\end{equation}
where we seek for non-negative solutions, with heteroclinic boundary at infinity:
\begin{equation}
\label{cond infty}
\begin{aligned}
 u(x)\to0, \; v(x)\to 1,\; \text{ as } x\to-\infty,\\
 u(x)\to1, \; v(x)\to 0,\; \text{ as } x\to +\infty.
\end{aligned}
\end{equation}

 The superposition of the rolls with their corresponding amplitudes (obtained as solutions of the GL system) forms the domain wall.

Similar systems of coupled Non-linear Schrödinger (NLS) equations and Gross-Pitaevskii (GP) equations describe, respectively, the co-propagation of electromagnetic waves with orthogonal polarizations in non-linear optical fibres and binary mixtures of Bose-Einstein condensates in cigar-shaped traps \cite{Jaksch2003}. Consider the system of coupled GP equations,
\[
\begin{aligned}
 i\frac{\partial \psi_1}{\partial t}=- \frac{\partial^2\psi_1}{\partial x^2}+\psi_1(g_{11}|\psi_1|^2+g_{12}|\psi_2|^2),\\
 i\frac{\partial \psi_2}{\partial t}=- \frac{\partial^2 \psi_2}{\partial x^2}+\psi_2(g_{12}|\psi_2|^2+g_{22}|\psi_1|^2).
\end{aligned}
\]
Standing wave solutions, $\psi_1=e^{\,-it}u, \psi_2=e^{\,-it}v$ with  $g_{11}=1=g_{22}, g_{12}=\mu$ coincide with the problem (\ref{eq:dw stationary}) when conditions at infinity for the envelope are set. While GL systems are dissipative, NLS and GP systems are conservative, hence, stability results for the solutions can differ. Using variational techniques, there has been many recent and significant results on the existence and stability of the standing waves for the Gross-Pitaevskii systems \cite{Stan}, \cite{Contreras2018}, \cite{Contreras2022}.

On the other hand, by using classical techniques, interesting results have been obtained concerning solutions to the  Thomas-Fermi (TF) approximation, where $D_2=0$  in the equation (\ref{eq:dw stationary}). The existence and characterization of the solution in this case have been studied in \cite{Malomed}.

Building on this, we present  here results concerning the  relation between the two mentioned problems, namely system  (\ref{eq:dw stationary}) and its associated TF limit approximation. To fix attention we set $D_1=1$ and consider $D_2=\varepsilon^2$ to be a small parameter. This system has actually physical interpretation: it correspond to the situation when the angle between the stripe patterns  is close to $\pi/2$ \cite{Pismen}. We note that (\ref{eq:dw stationary}) is variational and so is its formal Thomas-Fermi limit. In this paper we take advantage of this to study the relation between the minimizers of both problems under conditions (\ref{cond infty}) in the limit of the singular parameter. 
 For the TF problem, we prove the uniqueness of the minimizers. For the stationary GL system we show  $C^{1,\alpha}_{loc}$ convergence for the first component and point-wise  convergence for the second component  to the unique minimizer of the TF limit as the singular parameter approaches zero. This result is natural in the view of the fact that the TF limit is only H\"older continuous and presents what is known as the corner layer singularity. 
 This phenomenon has been shown to occur in related ordinary differential equation problems \cite{Alikakos2, Alikakos3, shadowkink}. Following the ideas of these works we establish here the connection between our singular perturbation problem and the Painlevé II equation, when the appropriate change of variables is applied to the solutions.

The paper is organized as follows. In Section \ref{Settingof}, we rigorously present the problem, including the system of equations, the corresponding energy functional, and the energy space in which we will work.  In Section \ref{existencesection}, we prove the existence and provide characterization of the minimizers  of the GL coupled system of equations when the perturbation parameter is considered to be fixed. We show uniform estimates in $L^\infty$ and monotonicity of the minimizers. In Section \ref{SingularLimitProblem}, we present the well-known solutions for the TF approximation problem \cite{Malomed}, and prove uniqueness in $H^1_{loc}\times C$ when the corresponding conditions at infinity are satisfied. Section \ref{LimitBehaviour} is devoted to the study of the behavior of the solutions when the perturbation parameter tends to zero. We find that there is convergence to the unique solution (up to translations) of the TF limit approximation problem. For the proof, we employ compactness techniques, achieved through the correct use of energy estimates. Finally, in Section \ref{PainleveProblem}, using the estimates from the previous section, we show that by applying the appropriate change of variables the limit equation becomes the Painleve II equation, similar to the results obtained in \cite{Alikakos2, Alikakos3, shadowkink}.

\section{Setting of the system}
\label{Settingof}
We are interested in working with the following coupled system of ordinary differential equations (c.f (\ref{eq:dw stationary})):
\begin{equation}
\label{system}
\begin{cases}
u''+u(1-u^2-\mu v^2)=0,\\
\varepsilon^2 v'' +v(1-v^2-\mu u^2)=0,
\end{cases}
 \end{equation}
 where $\varepsilon>0$ is a small parameter. 
Motivations for the study of this system are well established \cite{Cross}, \cite{Pismen}.
We start by setting up the problem of existence, proposing a variational approach considering the following energy functional:
\begin{equation}
\label{energy}
E_\varepsilon(u,v)=\int_{-\infty}^{\infty} \frac{(u')^2}{2}+\varepsilon^2 \frac{(v')^2}{2}+\frac{W(u,v)}{2},
\end{equation}
where $W$ is defined by
\begin{equation}
W(u,v):=\frac12 (u^2+v^2-1)^2+(\mu-1)\frac{u^2 v^2}{2}.
\end{equation}
We will minimize this functional in  the following set of functions:
\[
X:=\{(u,v)\in H^1_{loc}\times H^1_{loc} | (u,v)\to (1,0),  x\to +\infty, \text{ and } (u,v)\to (0,1),  x\to-\infty \}.
\]
\section{General existence results}
\label{existencesection}
For the following proofs in this section we will denote
\[
E_\varepsilon(u,v)=\int e_\varepsilon(U(x))dx,
\]
where $U(x)=(u(x),v(x))$, and
\[
e_\varepsilon(U)=\frac{(u')^2}{2}+\varepsilon^2\frac{(v')^2}{2}+\frac{W(u,v)}{2}.
\]
We gather the properties of the potential $W$.
\begin{lemma}
We have 
\begin{itemize}
\item[(W1)] $W\in C^\infty (\mathbb{R}^2;\mathbb{R})$.
\item[(W2)] $W(u,v)\geq 0$ for all $(u,v)\in \mathbb{R}^2$, and $W(u,v)=0$ if and only if $(u,v)=(1,0)$ or $(u,v)=(0,1)$.
\item[(W3)]$(1,0)$ and $(0,1)$ are non-degenerate global minima of $W$.
\item[(W4)] There exist constants $R_0,c_0 >0$ such that
\[
\nabla W(u,v)\cdot (u,v) \geq c_0 |(u,v)|^2 \text{ for all } (u,v)\in \mathbb{R}^2_+\text{ with } |(u,v)|\geq R_0,
\]
and as a consequence,
\[
\int_{(-R,R)}|U|^2 dx\leq \int_{(-R,R)}W(U)+2R R_0,
\]
for any value of $R>0$.
\end{itemize}
\end{lemma}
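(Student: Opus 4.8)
The plan is to dispatch the four properties in turn, noting at the outset that everything beyond (W1) hinges on the phase-separation (immiscibility) regime, which I take to be the standing assumption $\mu>1$; indeed for $\mu\le 1$ the potential either fails to be nonnegative or degenerates along the entire circle $u^2+v^2=1$, so no isolated non-degenerate wells exist. Property (W1) is immediate since $W$ is a polynomial in $(u,v)$. For (W2) I would keep $W$ in the given form
\[
W(u,v)=\tfrac12(u^2+v^2-1)^2+\tfrac{\mu-1}{2}u^2v^2,
\]
which exhibits it as a sum of two manifestly nonnegative terms once $\mu>1$, so $W\ge 0$. Vanishing forces both terms to be zero simultaneously, i.e.\ $u^2+v^2=1$ together with $uv=0$; on the closed first quadrant $\mathbb{R}^2_+$, where the admissible solutions live, this leaves exactly $(1,0)$ and $(0,1)$. (Over all of $\mathbb{R}^2$ the even symmetry in each variable would also produce $(-1,0)$ and $(0,-1)$, so the restriction to $\mathbb{R}^2_+$ is what pins down precisely the two stated zeros.)

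For (W3) I would check that each zero is a critical point and then compute the Hessian there. Direct differentiation gives
\[
W_{uu}=2(u^2+v^2-1)+4u^2+(\mu-1)v^2,\quad W_{vv}=2(u^2+v^2-1)+4v^2+(\mu-1)u^2,\quad W_{uv}=\big(4+2(\mu-1)\big)uv,
\]
so at $(1,0)$ the Hessian reduces to $\mathrm{diag}(4,\mu-1)$ and at $(0,1)$ to $\mathrm{diag}(\mu-1,4)$. Both are positive definite precisely when $\mu>1$, which is exactly the non-degeneracy asserted.

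For (W4) I would compute the radial derivative explicitly,
\[
\nabla W(u,v)\cdot(u,v)=2r^2(r^2-1)+2(\mu-1)u^2v^2\ge 2r^2(r^2-1),\qquad r^2:=u^2+v^2,
\]
where the inequality again uses $\mu>1$ (and in particular holds on all of $\mathbb{R}^2$, a fortiori on $\mathbb{R}^2_+$). Any pair $R_0,c_0$ with $2(R_0^2-1)\ge c_0$ then yields $\nabla W\cdot(u,v)\ge c_0|(u,v)|^2$ for $|(u,v)|\ge R_0$. For the integral consequence I would instead establish the pointwise bound $|U|^2\le W(U)+R_0$: writing $\rho=u^2+v^2$ and discarding the nonnegative term $\tfrac{\mu-1}{2}u^2v^2$,
\[
|U|^2-W(U)\le \rho-\tfrac12(\rho-1)^2\le \max_{\rho\ge 0}\big(\rho-\tfrac12(\rho-1)^2\big)=\tfrac32,
\]
so after enlarging $R_0$ if necessary to have $R_0\ge\tfrac32$ one gets $|U|^2\le W(U)+R_0$ everywhere; integrating over the interval $(-R,R)$, which has length $2R$, produces exactly $\int_{(-R,R)}|U|^2\le \int_{(-R,R)}W(U)+2RR_0$.

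The computations here are all elementary, so I expect no serious obstacle. The only points that require genuine care are isolating the correct sign hypothesis $\mu>1$ that turns $W$ into a sum of squares with non-degenerate wells, and the passage from the coercivity inequality to the integrated $L^2$ bound, where one should convert the directional information into a clean pointwise inequality and track the constant multiplying $2R$; the elementary one-variable maximization above keeps that constant sharp and matches the stated $2RR_0$.
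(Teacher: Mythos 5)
Your proof is correct, and in fact it supplies something the paper itself omits: the lemma is stated there without any proof at all (the authors simply gather these properties and point to the adaptation from the cited literature), so there is no argument of theirs to compare against. Your computations are the natural ones and they all check out: the Hessians $\mathrm{diag}(4,\mu-1)$ at $(1,0)$ and $\mathrm{diag}(\mu-1,4)$ at $(0,1)$, the radial derivative identity $\nabla W\cdot(u,v)=2r^2(r^2-1)+2(\mu-1)u^2v^2$, and the elementary maximization giving the pointwise bound $|U|^2\le W(U)+\tfrac32$, which after enlarging $R_0$ integrates to exactly the stated $L^2$ estimate. Two of your observations are genuine improvements on the paper's phrasing: first, you isolate the standing hypothesis $\mu>1$, without which (W2)--(W4) all fail (for $\mu<1$ the potential is negative at $u=v=1/\sqrt2$, and for $\mu=1$ it degenerates on the whole unit circle); second, you note that over all of $\mathbb{R}^2$ the zero set of $W$ is $\{(\pm1,0),(0,\pm1)\}$, so (W2) as literally stated only holds after restricting to $\mathbb{R}^2_+$, which is harmless here since the minimization is effectively carried out among nonnegative pairs. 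One cosmetic remark: when you deduce the integral bound "as a consequence" by a separate pointwise estimate rather than from the coercivity inequality itself, you are deviating from the letter of (W4) but not from anything that matters downstream; alternatively, integrating the coercivity bound along rays gives $W(U)\ge\tfrac{c_0}{2}\left(|U|^2-R_0^2\right)$ for $|U|\ge R_0$, which yields the same conclusion after relabeling constants, so either route is fine.
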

The results of this section are adapted to our context  from \cite{Stan}, as indicated below, and are included here for completeness.  

\begin{lemma}[\cite{Stan}]
\label{firstlemma}
 Let U be in $H^1_{loc}(\mathbb{R},\mathbb{R}^2)$, such that the energy defined above has finite value. Then $\lim_{x\to \pm \infty}W(U(x))=0$.
\begin{proof}
Denote $a=(0,1)$ and $b=(1,0)$. 
Since $a,b$ are non-degenerate global minimizers of $W$, there exists constants $C, \lambda, \Lambda,\delta>0$ such that
\begin{equation}
\label{est:star}
\frac{1}{C}|U-a| \leq \lambda|U-a| \leq \sqrt{W(U)} \leq \Lambda |U-a| \leq C |U-a| ,
\end{equation}
for any $U\in \mathbb{R}^2$ if $|U-a|\leq \delta$ for sufficiently small $\delta$. The same holds for $b$ instead of $a$.

Suppose $W(U(x))$ does not approach zero at infinity; then there must exist a sequence $(x_n)_n$ with $x_n\to \infty$ such that $W(U(x_n))>\eta$ for some $\eta>0$. From the previous observations, we deduce that we can choose a constant $C>0$ such that
\[
\min\{|U(x_n)-a|,|U(x_n)-b| \}\geq C^{-1}\sqrt{\eta}.
\]
Let us denote $\delta_0:=C^{-1}\sqrt{\eta}$.

Since the integral of $W(U(x))$ is finite, there also must exist a sequence $(t_n)_n$, tending to infinity, for which $W(U(t_n))\to 0$. Without loss of generality, we can assume $x_n$ and $t_n$ are interlaced in such a way that $t_n>x_n$ is the smallest value for which\[
|U(t_n)-a|=\frac12 \delta_0, \text{ or } |U(t_n)-b|=\frac12 \delta_0,\, \forall n\in \mathbb{N},
\]
where at least one of the conditions must occur infinitely often. Let us assume  $|U(t_n)-a|=\frac12 \delta_0$ happens infinitely many times and consider that subsequence (also interlaced with $x_n$). Notice that
\[
2 \sqrt{W(U(x))}|U'(x)|\leq W(U(x))+|U'(x)|^2,
\]
by integrating,
\[
\frac12 \int_{x_n}^{t_n} W(U(x))+|U'(x)|^2 dx \geq  \int_{x_n}^{t_n} \sqrt{W(U(x))}|U'|dx.
\]
With the above, when $\varepsilon>1$ we have
\[
\int_{x_n}^{t_n} e_\varepsilon(U)dx\geq  \int_{x_n}^{t_n} \sqrt{W(U(x))}|U_n'|dx.
\]
At the same time, when $\varepsilon \leq 1$, we have $\|U'\|\leq \varepsilon^{-2} \left((u')^2 +\varepsilon^2 (v')^2 \right)$, hence
\[
\varepsilon^{-2}\int_{x_n}^{t_n} e_\varepsilon(U)dx\geq \frac{1}{2}\int_{x_n}^{t_n} W(U(x))+|U'(x)|^2 dx \geq  \int_{x_n}^{t_n} \sqrt{W(U(x))}|U'|dx.
\]
Considering all values of $\varepsilon>0$ we obtain
\[
\max(1,\varepsilon^{-2}) \int_{x_n}^{t_n} e_\varepsilon(U)dx\geq \int_{x_n}^{t_n} \sqrt{W(U(x))}|U'|dx=\int_{\sigma_n} \sqrt{W}ds,
\]
where $\sigma_n$ is the path in $\mathbb{R}^2$ parametrized by $U(x)$ for $x\in (t_n,x_n)$.
We observe that the arclenght of $\sigma_n$ is at least $\delta_0/2$, therefore
\begin{align*}
\max(1,\varepsilon^{-2}) \int_{x_n}^{t_n} e_\varepsilon(U(x))dx &\geq \int_{\sigma_n} \sqrt{W}ds \\ &\geq \int_{\sigma_n} C^{-1}\cdot |U(x)-a| ds \\
&\geq C^{-1}\cdot |U(t_n)-a| \cdot \int_{\sigma_n} ds\\
&=C^{-1}\cdot \left(\frac{\delta_0}{2}\right)^2.
\end{align*}
The third inequality comes from the fact that for all $x\in (x_n,t_n)$, the distance between $U(x)$ and $a$ must be greater than $\frac{1}{2}\delta_0$, this is because $t_n$ was chosen as the first  $x$ greater than $x_n$ taking this value, where $|U(x_n)-a|>\delta_0$. From the previous observations, the integral of $e_\varepsilon(U(x))$ cannot be finite, leading to a contradiction.
\end{proof}
\end{lemma}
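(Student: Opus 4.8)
The plan is to exploit two consequences of the finiteness of the energy: since $\varepsilon > 0$ is fixed and $W \geq 0$ by (W2), the bound $E_\varepsilon(U) < \infty$ forces $U' \in L^2(\mathbb{R},\mathbb{R}^2)$ and $W(U) \in L^1(\mathbb{R})$. The first gives, via Cauchy--Schwarz, the Hölder bound $|U(x) - U(y)| \leq \|U'\|_{L^2}\,|x-y|^{1/2}$, so the continuous representative of $U$ is uniformly continuous on $\mathbb{R}$. The strategy is then to upgrade this to uniform continuity of the scalar map $x \mapsto W(U(x))$ and to invoke the elementary fact that a nonnegative, uniformly continuous, integrable function on $\mathbb{R}$ must vanish at $\pm\infty$.

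First I would record that elementary fact in the form I need: if $f \geq 0$ is uniformly continuous with $\int_{\mathbb{R}} f < \infty$, then $f(x) \to 0$ as $x \to \pm\infty$. Indeed, if this failed there would be $\eta > 0$ and $x_n \to +\infty$ (say) with $f(x_n) \geq \eta$; uniform continuity produces a fixed $\delta > 0$ on which $f \geq \eta/2$ throughout $[x_n, x_n + \delta]$, and passing to a subsequence with disjoint such intervals yields $\int_{\mathbb{R}} f \geq \sum_n \eta\delta/2 = \infty$, a contradiction. Applying this to $f = W(U(\cdot))$ will finish the proof, provided $W \circ U$ is uniformly continuous.

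The key step, and the one I expect to carry the real content, is the uniform continuity of $W \circ U$. Since $W \in C^\infty$ is only locally Lipschitz, this reduces to showing that $U$ takes values in a bounded set: once $U(\mathbb{R})$ is precompact, $W$ is Lipschitz there, and the composition of a Lipschitz map with the uniformly continuous $U$ is uniformly continuous. To bound $U$ I would use the coercivity of $W$ (in the segregation regime $\mu > 1$ one has $W(u,v) \geq \tfrac12 (u^2 + v^2 - 1)^2$, so $W(p) \to \infty$ as $|p| \to \infty$): if $|U(x_n)| \to \infty$ along some sequence, then by the Hölder bound $|U| \geq \tfrac12 |U(x_n)|$ on an interval of fixed length about $x_n$, forcing $W(U)$ to be large there and hence $\int_{\mathbb{R}} W(U) = \infty$, against $W(U) \in L^1$. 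Thus $U$ is bounded.

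Collecting these facts, $W \circ U$ is a nonnegative, uniformly continuous, $L^1(\mathbb{R})$ function, so the elementary lemma gives $\lim_{x\to\pm\infty} W(U(x)) = 0$. The main obstacle is genuinely the boundedness of $U$; an alternative route that avoids it, and which may be more robust for the later $\varepsilon$-uniform estimates, is a weighted-length argument in which one compares $\int \sqrt{W(U)}\,|U'|\,dx$, controlled by the energy through $2\sqrt{W}\,|U'| \leq W + |U'|^2$, against the geometric length traced by $U$ between a point where $W(U)$ is bounded below and a nearby point where it is small.
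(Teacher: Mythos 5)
Your proposal is correct, but it takes a genuinely different route from the paper's. The paper argues by contradiction with an excursion estimate: from points $x_n\to\infty$ with $W(U(x_n))>\eta$ it constructs interlaced points $t_n$ at which $U$ first comes within $\delta_0/2$ of one of the wells, and uses $2\sqrt{W}\,|U'|\le W+|U'|^2$ to bound $\max(1,\varepsilon^{-2})\int_{x_n}^{t_n}e_\varepsilon(U)\,dx$ from below by the weighted length $\int_{\sigma_n}\sqrt{W}\,ds$ of the arc traced by $U$; each excursion has arclength at least $\delta_0/2$ while $\sqrt{W}$ stays bounded below along it, so each one costs a fixed amount of energy, and infinitely many of them contradict $E_\varepsilon(U)<\infty$. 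You argue directly instead: for fixed $\varepsilon>0$, finite energy gives $U'\in L^2(\mathbb{R})$ and $W(U)\in L^1(\mathbb{R})$; Cauchy--Schwarz gives a global $C^{0,1/2}$ bound, hence uniform continuity of $U$; coercivity of $W$ (valid since $\mu>1$, which is implicit in (W2)) together with $W(U)\in L^1$ gives boundedness of $U$, hence uniform continuity of $W\circ U$; and the Barbalat-type lemma (nonnegative, uniformly continuous, integrable implies vanishing at infinity) finishes. All your steps are sound, including the boundedness step, where already a single interval of fixed length around a point where $|U|$ is huge makes $\int W(U)$ large. The trade-off: your proof is more elementary and each ingredient is standard, whereas the paper's weighted-length estimate is precisely the tool that gets reused in the existence proof (Theorem \ref{existenceproof}), where one needs a quantitative energy cost per crossing between neighborhoods of the wells, not merely qualitative decay of $W(U)$. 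One caveat on your closing remark: the paper's argument is not $\varepsilon$-uniform either, since its lower bound carries the factor $\max(1,\varepsilon^{-2})$, just as your modulus of continuity degenerates through $\|v'\|_{L^2}\le\sqrt{2E_\varepsilon}/\varepsilon$; neither proof survives the limit $\varepsilon\to 0$ as stated, which is consistent with the fact that the limit profile $v_0$ fails to be in $H^1_{loc}$ near the corner.
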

Set $m:=\inf_{U\in X} E(U)$.

\begin{lemma}[\cite{alama1997}]
\label{secondlemma}
 Let $U(x)=(u(x),v(x))\in H^1_{loc}([L_1,L_2];\mathbb{R}^2)$, with $|U(L_1)-(0,1)|< \delta$ and $|U(L_2)-(1,0)|<\delta$, where $\delta>0$. Then, there exists a constant $C_1>0$ such that
 \[
\int_{[L_1,L_2]}e_\varepsilon(U(x))dx\geq m-C_1\left[|U(L_1)-(0,1)|^2 +|U(L_2)-(1,0)|^2 \right].
 \]
\begin{proof}
The proof is straightforward, based on constructing a function whose values coincide with $U$ in $[L_1,L_2]$, then using linear interpolation to reach $a=(0,1)$ and $b=(1,0)$ respectively on the left and right-hand side of the interval, interpolating over a unit length interval. This can be interpreted as computing the energy of the function defined by:
\[
\tilde{U}(x):=\begin{cases} U(x) &\text{ for } x\in [L_1,L_2]\\
(L_1-x) a+ (x-L_1+1)U(L_1) &\text{ for } x\in [L_1-1,L_1]\\
(L_2+1-x) U(L_2)+(x-L_2) b &\text{ for } x\in [L_2,L_2+1]\\
a&\text{ for } x\in (-\infty,L_1-1]\\
b &\text{ for } x\in [L_2+1,+\infty).\end{cases}
\]
Since  $m:=\inf_{U\in X} E(U)$, then
\[
 E(\tilde{U})\geq m,
\]
and thus
\[
\int_{[L_1,L_2]}e_\varepsilon(U(x))dx + \int_{[L_1-1,L_1]}e_\varepsilon(\tilde U(x))dx+\int_{[L_2,L_2+1]}e_\varepsilon(\tilde U(x))dx\geq m.
\]
Now, we make the following observation:
\[
\int_{[L_1-1,L_1]} \frac{(\tilde{u}')^2}{2}+\varepsilon^2\frac{(\tilde{v}')^2}{2} dx\leq C\cdot |U(L_1)-a|^2,
\]
and
\[
\int_{[L_2,L_2+1]} \frac{(\tilde{u}')^2}{2}+\varepsilon^2\frac{(\tilde{v}')^2}{2} dx\leq C\cdot |U(L_2)-b|^2,
\]
for some non-negative constant $C$. On the other hand, using the fact that for sufficiently small $|U-a|$, the inequality $W(U(x))\leq C |U(x)-a|^2$ holds by (\ref{est:star}) we obtain
\[
\int_{[L_1-1,L_1]} W(\tilde{U}(x))dx \leq C |U(L_1)-a|^2,
\]
and similarly
\[
\int_{[L_2,L_2+1]} W(\tilde{U}(x))dx \leq C |U(L_2)-b|^2.
\]
We conclude that
\[
\int_{[L_1,L_2]}e_\varepsilon(U(x))dx \geq m-C \left(|U(L_1)-a|^2+|U(L_2)-b|^2 \right)
\]
if $|U(L_1)-b|$ and $|U(L_2)-a|$ are small enough, where the constant depends only on $W,$ and $\delta$.
\end{proof}
\end{lemma}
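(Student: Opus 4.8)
The plan is to turn $U$ into an admissible competitor for the full minimization problem defining $m$ and then read off the desired bound from the inequality $E(\tilde U)\ge m$. Write $a=(0,1)$ and $b=(1,0)$ for the two wells of $W$. Since $U$ is only defined on $[L_1,L_2]$ and its endpoints sit within $\delta$ of $a$ and $b$ respectively, I would extend it to a function $\tilde U\in X$ by first linearly interpolating from $a$ to $U(L_1)$ over the unit interval $[L_1-1,L_1]$, then keeping $\tilde U=U$ on $[L_1,L_2]$, then linearly interpolating from $U(L_2)$ to $b$ over $[L_2,L_2+1]$, and finally setting $\tilde U\equiv a$ on $(-\infty,L_1-1]$ and $\tilde U\equiv b$ on $[L_2+1,\infty)$. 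This $\tilde U$ has the correct limits at $\pm\infty$, so $E(\tilde U)\ge m$ by the definition of $m$.

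The second step is to split $E(\tilde U)$ according to these five pieces. On the two outer rays $\tilde U$ is constant and equal to a zero of $W$, so those contributions vanish, while the middle piece is exactly $\int_{[L_1,L_2]}e_\varepsilon(U)$. Hence
\[
m\le \int_{[L_1,L_2]}e_\varepsilon(U)\,dx+\int_{[L_1-1,L_1]}e_\varepsilon(\tilde U)\,dx+\int_{[L_2,L_2+1]}e_\varepsilon(\tilde U)\,dx,
\]
and everything reduces to controlling the two interpolation integrals by the squared endpoint distances $|U(L_1)-a|^2$ and $|U(L_2)-b|^2$.

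For the kinetic part, on $[L_1-1,L_1]$ the derivative $\tilde U'=U(L_1)-a$ is constant, so the kinetic energy there equals $\tfrac12\bigl(u(L_1)^2+\varepsilon^2(v(L_1)-1)^2\bigr)\le \tfrac12\max(1,\varepsilon^2)\,|U(L_1)-a|^2$, with the analogous bound on $[L_2,L_2+1]$. For the potential part, I would use that along the segment joining $a$ to $U(L_1)$ one has $|\tilde U(x)-a|\le|U(L_1)-a|\le\delta$, so for $\delta$ small the quadratic upper bound $W(\tilde U)\le C|\tilde U-a|^2$ furnished by the non-degeneracy estimate (\ref{est:star}) applies; integrating over the unit interval gives $\int_{[L_1-1,L_1]}W(\tilde U)\le C|U(L_1)-a|^2$, again with the symmetric statement near $b$. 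Combining these bounds and rearranging yields the claim, with $C_1$ depending only on $W$ and $\delta$.

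The only genuine subtlety I anticipate is ensuring that the interpolation path never leaves the neighborhood of the well where (\ref{est:star}) is valid, which is exactly what the hypotheses $|U(L_1)-a|<\delta$ and $|U(L_2)-b|<\delta$ guarantee; and keeping track of the factor $\max(1,\varepsilon^2)$ in the kinetic estimate, which is harmless for the bounded range of $\varepsilon$ relevant here but must be recorded if one wants a constant uniform in $\varepsilon$.
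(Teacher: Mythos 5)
Your proposal is correct and follows essentially the same route as the paper's proof: the identical extension $\tilde U$ by unit-length linear interpolation to the wells, the comparison $E(\tilde U)\ge m$, and the quadratic bounds on both the kinetic and potential contributions of the interpolation intervals via the non-degeneracy estimate (\ref{est:star}). If anything, you are slightly more careful than the paper in recording the $\max(1,\varepsilon^2)$ factor in the kinetic bound and in noting explicitly that the interpolation segment stays inside the neighborhood where (\ref{est:star}) holds.
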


Using the above we prove existence of solutions of the problem \eqref{system} for any real value of $\varepsilon$.
\begin{theorem}[\cite{Stan}]
\label{existenceproof}
For all values of $\varepsilon>0$, there exists a minimizer of $E_\varepsilon(\cdot,\cdot)$ defined by \eqref{energy} in the set $X$.

\begin{proof}
Let $(\tilde{u}_n,\tilde{v}_n)_n$ be a minimizing sequence of the functional $E_\varepsilon(\cdot,\cdot)$ over $X$, and $\tau_n$ the smallest value for which $\tilde{u}(\tau_n)=\tilde{v}(\tau_n)$. Define $(u_n(x),v_n(x)):=(\tilde{u}_n(x+\tau_n),\tilde{v}(x+\tau_n))$. Clearly, $\| u_n' \|_{L^2} $ and $\|v_n'\|_{L^2}$ are uniformly bounded. By (W4), $u_n,v_n\in L^2(-R,R)$ for all $n$, and they also are uniformly bounded (in the $L^2(-R,R)$ norm) for any fixed $R>0$. Thus, $u_n,v_n \in H^1(-R,R)$ are uniformly bounded for all $n$.

Then there exists an a.e. pointwise convergent subsequence also denoted by $(u_n,v_n)$, converging to $(u,v)$. Moreover, $(u_n,v_n)$ converges uniformly on every compact subset of $\mathbb{R}$, and weakly in $H^1(-R,R)$, and so there exists a point-wise convergent subsequence on $\mathbb{R}$. By a direct application of Fatou's Lemma and the weakly lower semi-continuity of the norm, we conclude that this function is a minimizer.

Now, we have to see that indeed $(u,v)\in X$; let us check that it takes the desired values at $\pm \infty$. By Lemma \ref{firstlemma}, we know that $\lim_{x\to \pm \infty} W(u(x),v(x))=0$. Assume, for a contradiction, that $(u(x),v(x))\to (0,1)=a$ as $x\to\infty$. We claim that using Lemma \ref{secondlemma} and (W2)  this gives a contradiction with the fact that the chosen sequence is a minimizing sequence, creating a non-zero gap between the energy of the sequence and the minimizer.

Indeed, if $U(x)\to a$ as $x\to \infty$ then, for any $\eta>0$, there exists $L_1>0$ with $|U(L_1)-a|<\eta$. Since $U_n\to U$ locally uniformly, there exists sufficiently large $n$ such that $|U_n(L_1)-a|<2 \eta$, and some other $L_2>L_1$ such that $|U_n(L_2)-b|\leq \eta$. By Lemma \ref{secondlemma}, taking small enough $\eta$, we obtain
\begin{align*}
\int_{[L_1,L_2]}e_\varepsilon(U_n(x))dx &\geq m-C \left(|U_n(L_2)-b|^2+|U_n(L_1)-a|^2 \right),\\
&\geq m-5 C_1 \eta^2.
\end{align*}
Consider now the diagonal line in the phase space of $(u,v)$, denoted by $\Delta:=\{(u,v)\in \mathbb{R}^2_+|u=v\}$. We already know that $U_n(0)\in \Delta$. Fix now $\delta>0$ small enough such that $\dist(a,\Delta),\dist(b,\Delta)>2 \delta$, and let $x_n<0$ be the largest negative value for which $|U_n(x_n)-a|=\delta$. By hypothesis (W2) and (W4), there exists $w_0>0$ with $\sqrt{W(V)}\geq w_0$ for all $V\in \mathbb{R}_+^2$ with $\dist(a,V),\dist(b,V) \geq 2 \delta$. Consider now $D=\dist(\Delta,B_\delta(a))$, which is clearly positive. Just as in Lemma \ref{firstlemma}, we notice that \[
2 \sqrt{W(U_n(x))}|U_n'(x)|\leq W(U_n(x))+|U_n'(x)|^2,
\]
by integrating,
\[
\frac12 \int_{x_n}^0 W(U_n(x))+|U_n'(x)|^2 dx \geq  \int_{x_n}^0 \sqrt{W(U_n(x))}|U_n'|dx.
\]
For all values of $\varepsilon>0$ we obtain
\begin{align*}
\max(1,\varepsilon^{-2}) \int_{x_n}^0 e_\varepsilon(U_n)dx&\geq \int_{x_n}^0 \sqrt{W(U_n(x))}|U_n'|dx\\&=\int_{\{U_n(x): x_n\leq x\leq 0\}} \sqrt{W}ds \\&\geq w_0 D,
\end{align*}
where the last inequality comes from the fact that $U_n(0)\in \Delta$, and $U_n(x_n)\in \partial B_\delta(a)$.
Thus we have shown that for any $n$:
\[
\max(1,\varepsilon^{-2}) \int_{x_n}^0 e_\varepsilon(U_n(x))dx \geq \int_{x_n}^0 \sqrt{W(U_n(x))}dx \geq w_0 D.
\]
Combining the two previous parts, we obtain:
\begin{align*}
\int_{-\infty}^\infty e_\varepsilon(U_n)dx &\geq \left[\int_{x_n}^0+\int_{L_1}^{L_2} \right]e_\varepsilon(U_n)dx\\ & \geq m+\frac{w_0 D}{\max(1,\varepsilon^{-2})}-5C_1\eta^2.
\end{align*}
By taking $\eta$ small enough, the previous inequality implies the existence of a non-zero gap between the energy of the $(U_n)_n$ family and the minimizer, contradicting the fact that $(U_n)_n$ is a minimizing sequence. We conclude that $U(x)\to (1,0)=b$ as $x\to +\infty$, and by the same argument that $U(x)\to (0,1)=a$ as $x\to -\infty$.
\end{proof}
\end{theorem}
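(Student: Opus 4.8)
The plan is to run the direct method of the calculus of variations; the only real subtlety is checking that the weak limit inherits the boundary conditions built into $X$. I would begin with a minimizing sequence $(\tilde u_n,\tilde v_n)$ and use the translation invariance of $E_\varepsilon$ to normalize it. Since every pair in $X$ runs continuously from $(0,1)$ to $(1,0)$, the two components must coincide somewhere; translating so that this happens at the origin places $U_n(0)$ on the diagonal $\Delta:=\{(u,v)\in\mathbb{R}^2_+\mid u=v\}$. This normalization is exactly what drives the final step.

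Next I would extract uniform local bounds. The energy directly controls $\|u_n'\|_{L^2}$ and $\varepsilon\|v_n'\|_{L^2}$, while property (W4) turns the finite potential integral into a uniform $L^2$ bound for $U_n$ itself on every interval $(-R,R)$. Together these give uniform bounds in $H^1(-R,R)$ for each fixed $R$, so a diagonal extraction yields a subsequence (not relabeled) converging locally uniformly and weakly in $H^1_{loc}$ to some $U=(u,v)$. Lower semicontinuity is then routine: Fatou's lemma controls the potential term and weak lower semicontinuity of the $L^2$ norm controls the two gradient terms, giving $E_\varepsilon(U)\le \liminf E_\varepsilon(U_n)=m$, so $U$ is a minimizer provided it lies in $X$.

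The heart of the proof is showing $U\in X$, i.e.\ that $U$ attains the correct values at $\pm\infty$ rather than escaping to the wrong well. By Lemma \ref{firstlemma} we know $W(U(x))\to0$ as $x\to\pm\infty$, so at each end $U$ tends to one of the two zeros $a=(0,1)$ or $b=(1,0)$ of $W$; the task is to exclude the wrong alternative, say $U\to a$ as $x\to+\infty$. The main obstacle, and the crux of the argument, is to extract a strict energy gap from this assumption. I would split $\mathbb{R}$ into two regions. On a far interval $[L_1,L_2]$, chosen so that $U_n$ is within $\eta$ of $a$ at $L_1$ and of $b$ at $L_2$, Lemma \ref{secondlemma} gives the lower bound $m-C\eta^2$. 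On $[x_n,0]$, where $x_n<0$ is the last point at which $U_n$ meets $\partial B_\delta(a)$ before returning to $\Delta$ at the origin, the trajectory must cross the fixed gap between the ball $B_\delta(a)$ and the diagonal, a region on which $\sqrt{W}$ is bounded below by some $w_0>0$; the pointwise inequality $2\sqrt{W(U_n)}\,|U_n'|\le W(U_n)+|U_n'|^2$ together with the arclength estimate then forces an energy contribution of at least $w_0 D/\max(1,\varepsilon^{-2})$ with $D=\dist(\Delta,B_\delta(a))>0$, a bound independent of both $n$ and $\eta$. Adding the two contributions yields total energy at least $m+w_0 D/\max(1,\varepsilon^{-2})-C\eta^2$, which strictly exceeds $m$ once $\eta$ is small. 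This contradicts the minimizing property, forcing $U\to b$ as $x\to+\infty$; the symmetric argument at $-\infty$ then places $U$ in $X$ and finishes the proof.
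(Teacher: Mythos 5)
Your proposal is correct and follows essentially the same route as the paper: the same diagonal normalization $U_n(0)\in\Delta$, the same local $H^1$ bounds via (W4) with Fatou plus weak lower semicontinuity, and the same contradiction argument combining Lemma \ref{secondlemma} on $[L_1,L_2]$ with the arclength/$\sqrt{W}$ lower bound of order $w_0 D/\max(1,\varepsilon^{-2})$ on $[x_n,0]$. No substantive differences to report.
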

It is also possible for us to find a $L^\infty$ estimate for $u$ and $v$ that allow us to write the following result:
\begin{corollary}[\cite{Stan}]
The solutions $(u,v)$ for \eqref{system} satisfying $({u},{v})\to (1,0)$ as $x\to +\infty$, and $({u},{v})\to (0,1)$ as $x\to -\infty$, are taking values between 0 and 1.
\begin{proof}
We know from the previous theorem that the exists a solution for the problem. Let us see that necessarily the values are taken between $0$ and $1$. We define the function $\varphi:=\left(u_\varepsilon^2+ v_\varepsilon^2-1 \right)$ and observe that
\[
-\frac12 \varphi''+(u_\varepsilon^2+\frac{v_\varepsilon^2}{\varepsilon^2})\varphi=-(u_\varepsilon')^2-(v_\varepsilon')^2-u_\varepsilon(u_\varepsilon''+u_\varepsilon(1-u_\varepsilon^2-v_\varepsilon^2))-v_\varepsilon( v''_\varepsilon+\frac{v_\varepsilon}{\varepsilon^2}(1-u_\varepsilon^2-v_\varepsilon^2))
\]
\[
=-(u_\varepsilon')^2-(v_\varepsilon')^2-(\mu-1 )v_\varepsilon^2 u_\varepsilon^2-\frac{\mu-1}{\varepsilon^2} v_\varepsilon^2 u_\varepsilon^2\leq 0.
\]
Noticing that the function $\varphi_+(x):=\max(0,\varphi(x))$ tends to $0$ at $\pm \infty$  we can multiply the previous equation by $\varphi_+$ and integrate to obtain
\[
\int_\mathbb{R} \left(\frac12  (\varphi'_+)^2+(u_\varepsilon^2+\frac{v_\varepsilon^2}{\varepsilon^2})\varphi_+^2 \right)\leq 0,
\]
so we conclude that $\varphi(x) \leq 0$ for all $x\in \mathbb{R}$, and thus $u_\varepsilon^2(x)+v_\varepsilon^2(x)\leq 1$.
Also, given that $(u,v)\in X$ implies $(|u|,|v|)\in X$, and $E(u,v)=E(|u|,|v|)$ the result is obtained.
\end{proof}
\end{corollary}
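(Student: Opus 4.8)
\emph{Plan.} The aim is to establish the two-sided bound in two independent steps: the upper bound $u^2+v^2\le 1$ by a maximum-principle argument applied to the auxiliary function $\varphi:=u^2+v^2-1$, and the nonnegativity $u,v\ge 0$ from the invariance of the energy under componentwise absolute values.

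For the upper bound I would differentiate twice,
\[
\varphi''=2(u')^2+2(v')^2+2u\,u''+2v\,v'',
\]
and substitute the two equations of \eqref{system}, namely $u''=-u(1-u^2-\mu v^2)$ and $v''=-\varepsilon^{-2}v(1-v^2-\mu u^2)$. After collecting terms the goal is to rewrite the outcome in the form
\[
-\tfrac12\varphi''+\Big(u^2+\tfrac{v^2}{\varepsilon^2}\Big)\varphi
=-(u')^2-(v')^2-(\mu-1)\Big(1+\tfrac{1}{\varepsilon^2}\Big)u^2v^2,
\]
whose right-hand side is $\le 0$ because $\mu>1$ (as forced by (W2)), while the coefficient $c(x):=u^2+v^2/\varepsilon^2$ is nonnegative. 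Since the solution converges to $(1,0)$ and $(0,1)$ at $\pm\infty$ we have $\varphi\to 0$ there, so $\varphi_+:=\max(0,\varphi)$ vanishes at infinity. Multiplying the displayed inequality by $\varphi_+\ge 0$ and using $\varphi\varphi_+=\varphi_+^2$ and $\varphi'\varphi_+'=(\varphi_+')^2$ almost everywhere, one integrates over $[-R,R]$ and lets $R\to\infty$ to obtain
\[
\int_{\mathbb{R}}\Big(\tfrac12(\varphi_+')^2+c(x)\,\varphi_+^2\Big)\,dx\le 0 .
\]
As the integrand is nonnegative it must vanish identically, and together with $\varphi_+\to 0$ at infinity this forces $\varphi_+\equiv 0$, i.e.\ $u^2+v^2\le 1$ on all of $\mathbb{R}$.

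For the nonnegativity I would observe that $(u,v)\in X$ implies $(|u|,|v|)\in X$, since the heteroclinic endpoints $(1,0)$ and $(0,1)$ have nonnegative entries and are unchanged under taking absolute values, while $E_\varepsilon(u,v)=E_\varepsilon(|u|,|v|)$ because the kinetic terms involve only $(u')^2,(v')^2$ and $W$ depends on $u,v$ solely through $u^2,v^2$. Hence the minimizer produced in Theorem \ref{existenceproof} may be taken with $u,v\ge 0$, and combined with the upper bound all values then lie in $[0,1]$.

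The step I expect to be the main obstacle is the rigorous justification of the integration by parts, specifically that the boundary contributions $\varphi'\varphi_+\big|_{-R}^{\,R}$ vanish as $R\to\infty$ and that the limiting integral is meaningful. This follows from the $C^2$ regularity of solutions of the ODE system \eqref{system} together with the decay of $U$ at infinity encoded in \eqref{est:star} (which controls $\sqrt{W(U)}$, and hence $\varphi$, by $|U-a|$ near each endpoint): since $\varphi_+\to 0$ and $\varphi'$ stays bounded, the boundary terms go to zero, and the truncation-and-limit argument over $[-R,R]$ makes the final nonnegative integral well defined. Confirming that this decay is strong enough to keep $\varphi_+$ and $\varphi_+'$ square-integrable is the only delicate point.
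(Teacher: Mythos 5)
Your proposal is correct and follows essentially the same route as the paper: the same auxiliary function $\varphi=u^2+v^2-1$, the same differential identity $-\tfrac12\varphi''+\bigl(u^2+\tfrac{v^2}{\varepsilon^2}\bigr)\varphi=-(u')^2-(v')^2-(\mu-1)\bigl(1+\tfrac{1}{\varepsilon^2}\bigr)u^2v^2\le 0$, the same test with $\varphi_+$ to conclude $u^2+v^2\le 1$, and the same symmetry argument $E_\varepsilon(u,v)=E_\varepsilon(|u|,|v|)$ for nonnegativity. The only difference is that you spell out the truncation over $[-R,R]$ and the vanishing of the boundary terms, a justification the paper leaves implicit.
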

About the monotonicity of the solutions $u_\varepsilon$ and $v_\varepsilon$ we recall the following known result
\begin{theorem}[\cite{Stan}]
Assume $(W1)$-$(W4)$, and let $(u_\varepsilon,v_\varepsilon)$ be any energy minimizing solution of \eqref{energy} in $X$, then $u_\varepsilon'(x)>0$ and $v_\varepsilon'(x)<0$ for all $x\in \mathbb{R}$.
\end{theorem}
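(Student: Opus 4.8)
The plan is to prove monotonicity by a simultaneous two-point rearrangement (polarization) argument that uses minimality directly, and then to upgrade the resulting weak monotonicity to strict inequalities via the linearized equation. Throughout I use that $0\le u_\varepsilon,v_\varepsilon\le 1$ and $u_\varepsilon^2+v_\varepsilon^2\le 1$ from the previous corollary, together with the fact that $\mu>1$; the latter is the regime singled out by (W2), since on the circle $u^2+v^2=1$ one has $W=\frac{\mu-1}{2}u^2v^2$, which is positive off the poles only when $\mu>1$. Fix a center $x_0\in\mathbb{R}$ and write $\sigma(x)=2x_0-x$. Define the polarization $u^H$ of $u:=u_\varepsilon$ by $u^H(x)=\max\{u(x),u(\sigma(x))\}$ for $x\ge x_0$ and $u^H(x)=\min\{u(x),u(\sigma(x))\}$ for $x<x_0$, pushing large values of $u$ to the right; define $v^H$ of $v:=v_\varepsilon$ with the roles of $\max$ and $\min$ interchanged, pushing large values of $v$ to the left. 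This is the natural choice given the expected monotonicity, and the limits at $\pm\infty$ show at once that $(u^H,v^H)\in X$: for instance, as $x\to+\infty$, $u^H(x)\to\max\{1,0\}=1$ and $v^H(x)\to\min\{0,1\}=0$, with the symmetric behaviour at $-\infty$.

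The first step is to show $E_\varepsilon(u^H,v^H)\le E_\varepsilon(u,v)$. The two kinetic terms do not increase, by the standard polarization inequality for the Dirichlet energy, $\int (u^H{}')^2\le\int (u')^2$ and $\int (v^H{}')^2\le\int (v')^2$. For the potential term I reduce to a pointwise inequality on each reflected pair $\{x,\sigma(x)\}$. Writing $s=u^2,\ t=v^2$ (order-preserving since $u,v\ge 0$), a direct computation gives, for the two pairings of $\{s(x),s(\sigma(x))\}$ with $\{t(x),t(\sigma(x))\}$,
\[
\big[(s_1+t_1-1)^2+(s_2+t_2-1)^2\big]-\big[(s_1+t_2-1)^2+(s_2+t_1-1)^2\big]=2(s_1-s_2)(t_1-t_2),
\]
and likewise $(s_1t_1+s_2t_2)-(s_1t_2+s_2t_1)=(s_1-s_2)(t_1-t_2)$. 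At each point the polarization pairs the larger value of $s$ with the smaller value of $t$, i.e. the oppositely sorted configuration, for which both differences above are $\le 0$; since the original pairing is one of the two and the polarized one minimizes both quantities over the two pairings, and since the coefficient of $u^2v^2$ in $W$ is $(\mu-1)/2>0$, both contributions to $\int W$ do not increase. Hence $E_\varepsilon(u^H,v^H)\le E_\varepsilon(u,v)$.

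Because $(u,v)$ is a minimizer and $(u^H,v^H)\in X$, the pair $(u^H,v^H)$ is also a minimizer and every inequality above is an equality. In particular the Dirichlet polarization inequality for $u$ is saturated, so by the rigidity of this inequality either $u^H=u$ or $u^H=u\circ\sigma$. The second alternative would force $u(\sigma(x))\ge u(x)$ for $x\ge x_0$, which as $x\to+\infty$ gives $0\ge 1$, a contradiction; hence $u^H=u$ for every center $x_0$. If $u$ were not nondecreasing there would exist $a<b$ with $u(a)>u(b)$, and with $x_0=(a+b)/2$ one gets $u^H(b)=\max\{u(b),u(a)\}=u(a)>u(b)$, contradicting $u^H=u$. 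Thus $u$ is nondecreasing, and the same argument applied to $v^H$ shows $v$ is nonincreasing. To pass to strict inequalities, note first that $u>0$ and $v>0$ on all of $\mathbb{R}$: if $u(x_1)=0$ at a finite point then monotonicity forces $u\equiv 0$ on $(-\infty,x_1]$, so $u(x_1)=u'(x_1)=0$ and uniqueness for the (autonomous) Euler--Lagrange equation yields $u\equiv 0$, contradicting $u(+\infty)=1$; similarly for $v$. By autonomy of the system, $\Phi:=(u',-v')$ solves the linearization, which after this sign change is a cooperative system whose off-diagonal coupling coefficient is a positive multiple of $uv>0$. Since $\Phi\ge 0$ componentwise and $\Phi\not\equiv 0$, the strong maximum principle for irreducible cooperative systems gives $\Phi>0$, that is $u_\varepsilon'>0$ and $v_\varepsilon'<0$ on $\mathbb{R}$.

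I expect the main obstacle to be the passage from ``the energy does not increase under polarization'' to genuine monotonicity, namely the equality (rigidity) analysis of the Dirichlet polarization inequality combined with the asymmetric boundary data used to exclude the reflected alternative; the two-point computation for the $(u^2+v^2-1)^2$ term is the other delicate point, since it is this term, rather than the manifestly favorable coupling term $u^2v^2$, that genuinely entangles the two rearrangements.
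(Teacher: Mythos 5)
First, a remark on the comparison target: the paper does not prove this theorem at all --- it is recalled from \cite{Stan} --- so your proposal must be measured against the rearrangement-based argument given there. Your general strategy (rearrange so that $u$ and $v$ are sorted oppositely, using the supermodular structure of $W$ in $(u^2,v^2)$, then upgrade weak monotonicity to strict inequalities via the maximum principle for the differentiated, cooperative system) is indeed the standard route and is in the spirit of \cite{Stan}. Several of your steps are correct: the observation that $(W2)$ forces $\mu>1$, the membership $(u^H,v^H)\in X$, the two-point inequality for the potential (both second differences are multiples of $(s_1-s_2)(t_1-t_2)$ with positive coefficients, so the oppositely sorted pairing minimizes), the positivity $u,v>0$ via ODE uniqueness, and the final strong-maximum-principle step for $\Phi=(u',-v')$.

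The genuine gap is the rigidity step, and it is fatal as written: there is no ``rigidity of the Dirichlet polarization inequality,'' because polarization preserves the Dirichlet integral \emph{exactly} for every function. On each reflected pair $\{x,\sigma(x)\}$ the values of $u^H$ are a permutation of those of $u$ and the derivatives are swapped almost everywhere, so $\int ((u^H)')^2=\int (u')^2$ is an identity, not an inequality with a meaningful equality case. Hence saturation carries no information, and the dichotomy ``$u^H=u$ or $u^H=u\circ\sigma$'' is unfounded (and false in general: any $u$ whose graph crosses that of $u\circ\sigma$ transversally has $u^H$ equal to neither, with the same kinetic energy). What minimality actually yields is equality in the potential term, i.e. $(u(a)-u(b))(v(a)-v(b))\le 0$ for all $a,b\in\mathbb{R}$; this is far weaker than monotonicity --- it only says the image of $x\mapsto (u(x),v(x))$ lies on a non-increasing graph in the $(u,v)$-plane, and it is satisfied identically by \emph{any} non-monotone $u$ paired with $v:=1-u$. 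So the passage from energy equality to ``$u$ nondecreasing,'' which you yourself flag as the main obstacle, is exactly what is missing. Two ways to repair it: (i) replace polarization by the monotone rearrangements ($u^*$ nondecreasing, $v^*$ nonincreasing); in one dimension the P\'olya--Szeg\H{o} inequality for the monotone rearrangement \emph{does} have rigidity (equality forces almost every level to have a single preimage, hence monotonicity of a continuous function), and combined with your supermodularity computation for $\int W$ this gives weak monotonicity --- essentially the proof in \cite{Stan}; or (ii) keep polarization but exploit that $(u^H,v^H)$, having minimal energy, solves the Euler--Lagrange system, and exclude nontrivial crossings of $(u,v)$ with $(u\circ\sigma, v\circ\sigma)$ by matching Cauchy data at crossing points and invoking ODE uniqueness. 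Either of these constitutes the real content of the weak-monotonicity step; your strictness argument can then be kept as is.
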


\section{Singular limit problem}
\label{SingularLimitProblem}
In this section, we explore the limit problem when $\varepsilon=0$, defining functions and an energy functional which will play an important role in the following sections.

For $\varepsilon=0$, the equation \eqref{system} admits a solution $(u_0,v_0)$ where
\begin{equation}
\label{u_epsilon}
u_0(x)=\begin{cases}
\sqrt{\frac{2}{\mu+1}}\sech(\sqrt{\mu-1}(x-x_1)) \text{ when } x\leq z,\\
\tanh\left(\frac{x-x_0}{\sqrt{2},}\right) \text{ when } x\geq z,
\end{cases}
\end{equation}
\begin{equation}
\label{v_epsilon}
v_0(x)= \begin{cases}
\sqrt{1-\frac{2\mu}{\mu+1} \sech^2(\sqrt{\mu-1}(x-x_1))} \text{ when } x\leq z,\\
0 \text{ when } x\geq z,
\end{cases}
\end{equation}
where $z$ is a free parameter, $x_0$ solves
\[ \tanh\left(\frac{z-x_0}{\sqrt{2}}\right)=\sqrt{\frac{2}{\mu+1}}\sech(\sqrt{\mu-1}(z-x_1)), \]
 and $x_1$ satisfies
\[
x_1 \in z-\frac{1}{\sqrt{\mu-1}}\arcsech \left(\sqrt{\frac{\mu+1}{2\mu}}\right).
\]
These functions together form the minimizer (on the set of functions with the respective conditions at infinity) of the functional
\[
\tilde{E}(u,v):=\int_{-\infty}^{\infty} \frac{(u')^2}{2}+\frac12 (u^2+v^2-1)^2+\frac{(\mu-1)}{2}u^2 v^2.
\]
Next, we present the results corresponding to this section.
\begin{lemma}
\label{u0v0teo}
For $\varepsilon=0$, equation \eqref{system} admits a solution $(u_0,v_0)$, where $u_0$ is a Lipchitz continuous function and $v_0$ is a Hölder continuous function with exponent $\frac{1}{2}$.

Moreover, these functions are the unique up to  translations solutions in $H_{loc}^1\times C$, that fulfil conditions at infinity $(u,v)\to (1,0)$ as $x\to \infty$, $(u,v)\to(0,1)$ as $x\to -\infty$, and minimize
\[
\tilde{E}(u,v):=\int_{-\infty}^{\infty} \frac{(u')^2}{2}+\frac12 (u^2+v^2-1)^2+\frac{(\mu-1)}{2}u^2 v^2.
\]

\begin{proof}
The fact that $(u_0,v_0)$ is a solution of the problem is straightforward calculation which we omit here.
Suppose there exists a minimizer $(u,v)$ of $\tilde{E}$. This implies that $(u,v)$ solves the associated Euler-Lagrange equations:
\begin{equation}
\begin{cases}
u''+&u(1-u^2-\mu v^2)=0\\
&v(1-v^2-\mu u^2)=0.
\end{cases}
\end{equation}
We will establish the existence of $x^{**}$ such that $v(x)=0$, for all $x>x^{**}$ by exploiting the minimization property of $(u,v)$. Similarly, we will find $x^*$ such that $(1-v^2-\mu u^2)=0$ for all $x<x^*$, which follows from the continuity of $v$ and the necessity of $v\to 1$ as $x\to -\infty$.

We take
\[
x^{**}=\inf \{x| u(x)\geq \frac{1}{\sqrt{\mu}} \},
\]
and
\[
x^*=\sup \{x<x^{**}| v^2(s)=1-\mu u^2(s),\, \forall s<x  \}.
\]
Let us write 
\[
\tilde{E}(u,v) =\int \frac{(u')^2}{2}+\frac{(1-u^2)^2}{4}+\frac{v^4}{4}+\frac{v^2}{2}(\mu u^2-1),
\]
If $u(x)\geq \frac{1}{\sqrt{\mu}}$ then  for $x>x^{**}$  the energy density is minimized by $v(x)=0$ hence $u''+u(1-u^2)=0$ and so $u=\tanh\left(\frac{x-C_1}{\sqrt{2}} \right)$ for $x>x^{**}$. At the same time if $x<x^*$, then $u''+(\mu-1)u(u^2(\mu+1)-1)=0$, and by assuming $u'\to 0, u\to 0$ as $x\to -\infty$ we get that the only solution is $u=\sqrt{\frac{2}{\mu+1}} \sech\left(\sqrt{\mu-1}x+C_2 \right)$.

Now we will see what happens between $x^*$ and $x^{**}$ (in fact we will prove that necessarily $x^*=x^{**}$). If $x^* < x^{**}$ then there exists $\kappa$ such that $[x^*,x^*+\kappa)\subset [x^*,x^{**})$. Given that the term $(\mu u^2-1)$ takes negative values in this interval, the problem of minimizing the energy density has a non-zero solution for the values of $v$. Indeed the problem :
\[
 \min_{v>0} \frac{v^4}{4}+\frac{v^2}{2}(\mu u^2-1),
\]
is solved exactly by $v$ such that $v^2=(1-\mu u^2)$. Therefore, $v^2=(1-\mu u^2)$ for all $x\in[x^*,x^{*}+\kappa)$, this contradicts the definition of $x^{*}$. Finally, given that $u$ is continuous, we get that $u(x^*)=\frac{1}{\sqrt{\mu}}$, which directly implies $v(x^{*-})=v(x^{*+})=0$. Since this is the only possible discontinuity point for $v$, we deduce that $v$ is continuous.

The minimizer found coincides (except for a translation) with $(u_0,v_0)$. Given that the energy functional remains unchanged under translations, we conclude that $(u_0,v_0)$ has the same energy value as $(u,v)$, therefore it is a minimizer.
\end{proof}
\end{lemma}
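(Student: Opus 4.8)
The plan is to prove the two assertions in turn: that the explicit pair $(u_0,v_0)$ solves the $\varepsilon=0$ system with the stated regularity, and then the substantive claim, namely uniqueness of the minimizer modulo translation. For the latter I would reduce the coupled problem to a scalar one by eliminating $v$, which is possible precisely because the $\varepsilon=0$ energy contains no term in $v'$.

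First I would check by direct substitution that $(u_0,v_0)$ solves the limit system. On $\{x\ge z\}$ one has $v_0=0$ and $u_0=\tanh((x-x_0)/\sqrt2)$, which satisfies $u''+u(1-u^2)=0$; on $\{x\le z\}$ one has $v_0^2=1-\mu u_0^2$, so that $1-u_0^2-\mu v_0^2=(\mu-1)((\mu+1)u_0^2-1)$, and $u_0=\sqrt{2/(\mu+1)}\,\sech(\sqrt{\mu-1}(x-x_1))$ solves the resulting equation $u''+(\mu-1)u((\mu+1)u^2-1)=0$. The algebraic conditions imposed on $z,x_0,x_1$ are exactly those making $u_0$ and $u_0'$ match across $z$. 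Regularity then follows at once: $u_0$ solves a second-order ODE with continuous right-hand side, so it is $C^1$ and, being bounded with bounded derivative, Lipschitz; while $v_0=0$ for $x\ge z$ and $v_0=\sqrt{1-\mu u_0^2}$ for $x\le z$, and since $u_0(z)=1/\sqrt\mu$ with $u_0'(z)>0$ one gets $1-\mu u_0^2\sim c\,(z-x)$ as $x\to z^-$, whence $v_0\sim\sqrt c\,\sqrt{z-x}$, giving the $C^{0,1/2}$ corner-layer behaviour.

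For uniqueness I would start from a minimizer $(u,v)\in H^1_{loc}\times C$ satisfying the boundary conditions and observe that, since $\tilde E$ has no $v'$ term, for a.e. $x$ the value $v(x)$ must minimize $\tfrac14v^4+\tfrac12 v^2(\mu u(x)^2-1)$ pointwise. The unique nonnegative minimizer is $v=0$ when $\mu u^2\ge1$ and $v=\sqrt{1-\mu u^2}$ when $\mu u^2<1$, which fixes $v$ as a continuous function of $u$. Substituting back yields a scalar functional $\mathcal F(u)=\int\tfrac12(u')^2+\Phi(u)$ with $\Phi(u)=\tfrac14(1-u^2)^2-\tfrac14(1-\mu u^2)_+^2$. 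A short computation shows $\Phi\in C^1(\mathbb R)$ (the corner at $u=1/\sqrt\mu$ is only a jump in $\Phi''$), that $\Phi\ge0$ with non-degenerate, equal-depth wells $\Phi(0)=\Phi(1)=0$ and a single positive barrier in between, and that $\Phi(1/\sqrt\mu)>0$.

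The minimizer $u$ is thus a heteroclinic of $u''=\Phi'(u)$ from $0$ at $-\infty$ to $1$ at $+\infty$, and I would finish by the standard first-integral argument: the quantity $\tfrac12(u')^2-\Phi(u)$ is conserved and equals $0$ by the boundary conditions, so $u'=\sqrt{2\Phi(u)}>0$ on $(0,1)$, and separation of variables $\int \mathrm du/\sqrt{2\Phi(u)}=x+\mathrm{const}$ yields a unique monotone solution modulo the single translation constant, the endpoint integrals diverging because the wells are non-degenerate. Monotonicity forces $\{u<1/\sqrt\mu\}$ to be a half-line $(-\infty,z)$, and recovering $v$ pointwise reproduces exactly $(u_0,v_0)$; since $\tilde E$ is translation invariant, $(u_0,v_0)$ attains the infimum. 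I expect the main obstacle to be the rigorous elimination of $v$ — verifying that a minimizer in $H^1_{loc}\times C$ genuinely satisfies the pointwise relation almost everywhere, and that $\Phi$ is truly $C^1$ across the corner — so that the clean scalar first-integral argument survives the loss of regularity of $v$ at $z$.
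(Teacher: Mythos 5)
Your proposal is correct, and it reaches uniqueness by a genuinely different route than the paper. The common starting point is the same key observation: $\tilde E$ has no $(v')^2$ term, so a minimizer's $v$ must minimize $\frac{v^4}{4}+\frac{v^2}{2}(\mu u^2-1)$ pointwise. The paper exploits this locally: it stays with the coupled Euler--Lagrange system, introduces $x^{**}$ (the first crossing of $u$ through $1/\sqrt{\mu}$) and $x^{*}$, uses the pointwise-minimization argument to force $x^{*}=x^{**}$, and then identifies $u$ on each side as the explicit $\tanh$ and $\sech$ profiles matched at the corner. You instead use it globally to eliminate $v$ altogether: substituting $v^2=(1-\mu u^2)_+$ yields the scalar functional $\mathcal{F}(u)=\int\frac12(u')^2+\Phi(u)$ with $\Phi(u)=\frac14(1-u^2)^2-\frac14(1-\mu u^2)_+^2$, which is indeed $C^1$, nonnegative, with non-degenerate equal-depth wells at $u=0,1$, and uniqueness modulo translation then follows from the classical equipartition/quadrature argument $\frac12(u')^2=\Phi(u)$, $u'=\sqrt{2\Phi(u)}>0$. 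Your route buys monotonicity of $u$, continuity of $v$ (as $\sqrt{(1-\mu u^2)_+}$ composed with the continuous $u$), and uniqueness in one stroke, without needing the explicit formulas for the uniqueness step; it would even let you remove the conditional ``suppose a minimizer exists'' that both you and the paper assume, since Modica's inequality $\mathcal{F}(u)\ge\int_0^1\sqrt{2\Phi(s)}\,ds$ is saturated exactly by the quadrature heteroclinic. The paper's route is shorter given that the explicit profiles are already displayed and produces the formulas for $(u_0,v_0)$ as a by-product; note also that you sketch the verification and regularity part (Lipschitz $u_0$, the $C^{0,1/2}$ corner of $v_0$) which the paper omits as ``straightforward.'' The obstacles you flag are real but are handled exactly as you suggest: the competitor $(u,\sqrt{(1-\mu u^2)_+})$ is admissible because its continuity and limits at $\pm\infty$ are inherited from those of $u$, which justifies the a.e. pointwise relation; one should additionally fix the sign of $v$ --- by monotonicity $\{1-\mu u^2>0\}$ is a half-line $(-\infty,z)$, on which continuity and $v\to1$ at $-\infty$ force the positive root --- a point the paper also glosses over by restricting to $v>0$.
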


\section{Behaviour of the solutions as perturbation parameter goes to zero}
\label{LimitBehaviour}
To establish a connection between the solutions of the problem when $\varepsilon=0$ and the case $\varepsilon>0$, we rely on the following result:

\begin{lemma}
\label{chainofinequalities}
We have the following chain of inequalities
\[
\tilde{E}({u}_0,{v}_0)\leq E_\varepsilon(u_\varepsilon,v_\varepsilon)\leq \tilde{E}(u_0,v_0)+O\left(\varepsilon^2{{\ln\frac{1}{\varepsilon}}}\right).
\]
\end{lemma}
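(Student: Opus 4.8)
\emph{Strategy.} The plan is to treat the two inequalities separately: the lower bound is essentially free, whereas the upper bound is the substantial part and will be obtained by testing the minimality of $E_\varepsilon$ against a near-optimal competitor built from the limit profile $(u_0,v_0)$. The whole difficulty sits at the corner point $z$, where $v_0(x)\sim c\sqrt{z-x}$ for $x<z$ and $v_0\equiv 0$ for $x\ge z$; there $v_0'\sim (z-x)^{-1/2}$, so $\int(v_0')^2=+\infty$ and $(u_0,v_0)$ is \emph{not} itself admissible for $E_\varepsilon$. Regularizing this corner layer on a window of the right width is exactly what will produce the logarithmic correction.

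\emph{Lower bound.} For this I would use that $\tilde E$ is nothing but $E_\varepsilon$ with the term $\varepsilon^2\int(v')^2/2$ removed, so $\tilde E(u,v)\le E_\varepsilon(u,v)$ for every pair. Since $E_\varepsilon(u_\varepsilon,v_\varepsilon)<\infty$ forces $v_\varepsilon'\in L^2$ and hence $v_\varepsilon\in C$, the pair $(u_\varepsilon,v_\varepsilon)$ is an admissible competitor for $\tilde E$ inside $X$. Invoking the minimality of $(u_0,v_0)$ for $\tilde E$ from Lemma \ref{u0v0teo} and then discarding the nonnegative gradient term gives
\[
\tilde E(u_0,v_0)\le \tilde E(u_\varepsilon,v_\varepsilon)=E_\varepsilon(u_\varepsilon,v_\varepsilon)-\varepsilon^2\!\int\frac{(v_\varepsilon')^2}{2}\le E_\varepsilon(u_\varepsilon,v_\varepsilon).
\]

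\emph{Upper bound.} Here I would construct a single competitor $(\bar u,\bar v)\in X$ with $E_\varepsilon(\bar u,\bar v)\le \tilde E(u_0,v_0)+O(\varepsilon^2\ln\frac1\varepsilon)$; minimality of $(u_\varepsilon,v_\varepsilon)$ then finishes the proof. I keep $\bar u=u_0$, which is Lipschitz with $u_0'\in L^2$ and therefore contributes identically to both energies, and I only repair $v_0$ near $z$. Fixing a window length $\ell=\ell(\varepsilon)$, I set $\bar v=v_0$ on $(-\infty,z-\ell]$, $\bar v\equiv 0$ on $[z,\infty)$, and on $[z-\ell,z]$ I let $\bar v$ interpolate linearly from $v_0(z-\ell)$ down to $0$; then $\bar v\in H^1_{loc}$, it agrees with $v_0$ off the window, so $(\bar u,\bar v)\in X$ and
\[
E_\varepsilon(\bar u,\bar v)-\tilde E(u_0,v_0)=\varepsilon^2\!\int\frac{(\bar v')^2}{2}+\frac12\!\int_{z-\ell}^{z}\!\big[W(u_0,\bar v)-W(u_0,v_0)\big].
\]
For the gradient term I split the integral at $z-\ell$. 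The tail $\int_{-\infty}^{z-\ell}(v_0')^2$ is the only genuinely divergent piece: using $(v_0')^2\sim c^2/\big(4(z-x)\big)$ it equals $\tfrac{c^2}{4}\ln\frac1\ell+O(1)$, while the linear piece has slope $\sim c/\sqrt\ell$ and contributes only $O(1)$; hence $\varepsilon^2\int(\bar v')^2/2=\tfrac{c^2}{8}\,\varepsilon^2\ln\frac1\ell+O(\varepsilon^2)$. For the potential term I would use that $W(u_0,\cdot)$ is quadratic in $v^2$ with minimizer $v_0^2=1-\mu u_0^2$, so $W(u_0,\bar v)-W(u_0,v_0)=O\big((\bar v^2-v_0^2)^2\big)=O(\ell^2)$ on the window (both $\bar v$ and $v_0$ are $O(\sqrt\ell)$ there), giving an $O(\ell^3)$ contribution. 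Taking $\ell=\varepsilon$ balances the terms and produces the announced bound.

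\emph{Main obstacle.} The heart of the matter is the upper bound, and specifically the recognition that the corner-layer singularity of $v_0$ forces a loss of size exactly $\varepsilon^2\ln\frac1\varepsilon$ rather than $O(\varepsilon^2)$. This logarithm is the signature of the $\sqrt{z-x}$ profile, whose squared derivative $\sim(z-x)^{-1}$ integrates to $\ln\frac1\ell$ once cut off at the window scale. The care needed is to verify that the interior slope contribution and the potential error are genuinely of lower order, so that minimizing over $\ell$ (which gives $\ell\sim\varepsilon$) leaves the logarithm as the sharp correction; the remaining items—continuity of $\bar v$ across $z-\ell$ and $z$, reading the constant $c$ off \eqref{v_epsilon}, and controlling the $O(1)$ remainders away from $z$—are routine.
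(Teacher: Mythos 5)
Your proposal is correct and follows essentially the same route as the paper: the lower bound via $\tilde E\le E_\varepsilon$ plus minimality of $(u_0,v_0)$, and the upper bound via the competitor $(u_0,\bar v)$ obtained by linearly interpolating $v_0$ to zero on a small window at the corner $z$, with the $\varepsilon^2\ln\frac1\varepsilon$ term coming from the $(z-x)^{-1/2}$ blow-up of $v_0'$ outside the window. The only (immaterial) differences are that you use a one-sided window of width $\ell=\varepsilon$ with a sharper, quadratic-structure bound $O(\ell^3)$ on the potential error, whereas the paper uses a two-sided window of width $\varepsilon^\alpha$ with $\alpha=2$ and the cruder bound $C|I_\varepsilon|$.
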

As a consequence:
\begin{corollary}
\label{Corollary} For any sequence of $\varepsilon\to 0$, we have that
\[ \int \frac{({v_\varepsilon}')^2}{2}=O\left({\ln\frac{1}{\varepsilon}}\right).\]
\end{corollary}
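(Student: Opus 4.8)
The plan is to read the corollary off directly from Lemma \ref{chainofinequalities}, the only new ingredient being an exact algebraic decomposition of the energy. First I would record that $E_\varepsilon$ and $\tilde{E}$ differ only by the singular gradient term: their $u$-gradient and potential contributions coincide, so that
\[
E_\varepsilon(u,v)=\tilde{E}(u,v)+\varepsilon^2\int \frac{(v')^2}{2}.
\]
Evaluating this at the minimizer $(u_\varepsilon,v_\varepsilon)$ gives the identity
\[
\varepsilon^2\int \frac{(v_\varepsilon')^2}{2}=E_\varepsilon(u_\varepsilon,v_\varepsilon)-\tilde{E}(u_\varepsilon,v_\varepsilon),
\]
which isolates exactly the quantity to be estimated. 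Note also that $\tilde{E}(u_\varepsilon,v_\varepsilon)$ is finite, since by the decomposition $\tilde{E}(u_\varepsilon,v_\varepsilon)\le E_\varepsilon(u_\varepsilon,v_\varepsilon)<\infty$.

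Next I would use the variational characterization from Lemma \ref{u0v0teo}. Since $(u_\varepsilon,v_\varepsilon)\in X$, it attains the prescribed limits at $\pm\infty$ and lies in $H^1_{loc}\times H^1_{loc}\subset H^1_{loc}\times C$; hence it is an admissible competitor for the minimization of $\tilde{E}$, whose minimizer (unique up to translation) is $(u_0,v_0)$. Consequently $\tilde{E}(u_\varepsilon,v_\varepsilon)\ge \tilde{E}(u_0,v_0)$, and substituting this lower bound into the identity above yields
\[
\varepsilon^2\int \frac{(v_\varepsilon')^2}{2}\le E_\varepsilon(u_\varepsilon,v_\varepsilon)-\tilde{E}(u_0,v_0).
\]

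Finally I would invoke the upper estimate of Lemma \ref{chainofinequalities}, namely $E_\varepsilon(u_\varepsilon,v_\varepsilon)\le \tilde{E}(u_0,v_0)+O\left(\varepsilon^2\ln\frac{1}{\varepsilon}\right)$, to bound the right-hand side above by $O\left(\varepsilon^2\ln\frac{1}{\varepsilon}\right)$. Dividing through by $\varepsilon^2$ then gives $\int \frac{(v_\varepsilon')^2}{2}\le O\left(\ln\frac{1}{\varepsilon}\right)$, which is the claim; the bound holds along any sequence $\varepsilon\to 0$ because the constants appearing in Lemma \ref{chainofinequalities} do not depend on $\varepsilon$.

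I do not expect any genuine obstacle at this stage, as all the analytic difficulty is already absorbed into Lemma \ref{chainofinequalities}: its upper bound is the delicate part, obtained by testing $E_\varepsilon$ against a competitor built from $(u_0,v_0)$ with a thin transition layer resolving the corner singularity of $v_0$, which is precisely what produces the logarithmic loss $\varepsilon^2\ln\frac{1}{\varepsilon}$. The argument here is purely algebraic; the only points requiring care are the exact matching of the non-singular parts of $E_\varepsilon$ and $\tilde{E}$, so that their difference is \emph{exactly} $\varepsilon^2\int \frac{(v')^2}{2}$, and the verification that $(u_\varepsilon,v_\varepsilon)$ is admissible for $\tilde{E}$ so that its minimality can be applied.
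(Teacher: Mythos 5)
Your proposal is correct and follows essentially the same route as the paper: both proofs combine the minimality of $(u_0,v_0)$ for $\tilde{E}$ (so that $\tilde{E}(u_0,v_0)\le\tilde{E}(u_\varepsilon,v_\varepsilon)$), the decomposition $E_\varepsilon=\tilde{E}+\varepsilon^2\int\frac{(v')^2}{2}$, and the upper bound of Lemma \ref{chainofinequalities}, then divide by $\varepsilon^2$. The paper merely phrases the same algebra by adding $\varepsilon^2\int\frac{(v_\varepsilon')^2}{2}$ to both sides of the minimality inequality instead of writing your explicit identity, so there is no substantive difference.
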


\begin{proof}[Proof of Lemma \ref{chainofinequalities}]
We divide the proof into upper bound \textbf{UB}, and lower bound \textbf{LB}:
\begin{itemize}
\item[\textbf{LB}] We have $\tilde{E}(u_\varepsilon,v_\varepsilon)\leq E_\varepsilon(u_\varepsilon,v_\varepsilon)$, and since $(u_0,v_0)$ minimizes $\tilde{E}$ in  $X$:
\[
\tilde{E}(u_0,v_0)\leq E_\varepsilon(u_\varepsilon,v_\varepsilon).
\]
\item[\textbf{UB}] For this part, we introduce the function $\bar{v}_0$ defined as $v_0$ with a modification in the interval $I_\varepsilon=(z-\varepsilon^\alpha,z+\varepsilon^\alpha)$, where $z$ is the first value where $v_0$ becomes $0$, and the values in the interval are linearly interpolated between $v_0(z-\varepsilon^\alpha)$ and $v_0(z+\varepsilon^\alpha)$.
Since $(u_\varepsilon,v_\varepsilon)$ minimizes $E$ in $X$ and $(u_0,\bar{v}_0)\in X$ then:
\begin{align}
E(u_\varepsilon,v_\varepsilon)& \leq E_\varepsilon(u_0,\bar{v}_0)\\
&= \tilde{E}(u_0,\bar{v}_0)+\varepsilon^2 \int \frac{(\bar{v}_0')^2}{2}.
\end{align}
To complete the proof we claim that
\[
\varepsilon^2 \int_{I_\varepsilon}  \frac{(\bar{v}_0')^2}{2} = O(\varepsilon^2),
\]
and that
\[
\varepsilon^2 \int_{I_\varepsilon^c} \frac{({v}_0')^2}{2}=O\left(\varepsilon^2\ln\frac{1}{\varepsilon}\right),
\]
which can be shown by considering the expansion for $v_0$. We know
\[
\sech^2(x)=1-\frac{x^2}{2}+\frac{5x^4}{24}+O(x^6),
\]
and so
\[
v_0^2(x)=1-C_1\sech^2\left( C_2(x-z+C_3)\right),
\]
where $C_1=\frac{2\mu}{\mu+1}, C_2=\sqrt{\mu-1}, C_3=\frac{1}{\sqrt{\mu-1}}\arcsech\left( \sqrt{ \frac{\mu+1}{2\mu} } \right)$. Considering the expansion for $v_0^2(z+h)$ (with $h\leq 0$), we obtain
\begin{align}
v_0^2(z+h)&=1-C_1 \sech^2\left(C_2(h+C_3) \right)\\
&=\underbrace{1-C_1 \sech^2(C_2 C_3)}_{v_0^2(z)=0}+C_1\underbrace{\big[\sech^2(C_2C_3)-\sech^2(C_2(h+C_3)) \big]}_{O(h)}\\
&=O(h).
\end{align}
This implies that
\[
v_0(z-\varepsilon^\alpha)=O(\varepsilon^{\alpha/2}),
\]
and therefore,
\begin{align}
\varepsilon^2\int_{I_\varepsilon} \frac{(\bar{v}_0')^2}{2} &\leq \varepsilon^2 \cdot \varepsilon^\alpha \left(\frac{Const\cdot \varepsilon^{\alpha/2}}{2\varepsilon^\alpha} \right)^2\\
&=O(\varepsilon^2),
\end{align}
which gives us the upper bound
\[
E_\varepsilon(u_\varepsilon,v_\varepsilon)\leq \tilde{E}(u_0,\bar{v}_0)+O(\varepsilon^2)+\varepsilon^2 \int_{I_\varepsilon^c}\frac{(v_0')^2}{2}.
\]
\end{itemize}
For the integration term outside of the interval $I_\varepsilon$, we just notice that
\[
(\sech^2)'(x)=-x+O(x^3),
\]
then the expantion for $(v_0^2)'$ becomes:
\begin{align}
(v_0^2)'(z+h)&=-C_1(\sech^2)'(C_2(C_3+h))\\
&=-C_1 (\sech^2)'(C_2 C_3)-C_1\left[(\sech^2)'(C_2(C_3+h))-(\sech^2)'(C_2 C_3) \right]\\
&=k_1+O(h).
\end{align}
Using the identity $v_0'(z+h)=\frac{(v_0^2)'(z+h)}{2v_0(z+h)}$, we obtain
\[
v_0'(z+h)=\frac{k_1+O(h)}{O(h^{1/2})}=O(h^{-1/2})+O(h^{1/2}).
\]
Using the above:
\begin{align}
\varepsilon^2 \int_{-\infty}^{z-\varepsilon^{\alpha}}\frac{(v_0')^2}{2}dx&= \frac{\varepsilon^2}{2} \int^{-\varepsilon^\alpha}_{-\infty}
(v_0')^2(z+h)dh\\
&\leq \frac{\varepsilon^2}{2} \int_{-\infty}^{-1}(v_0')^2(z+h) dh+\frac{\varepsilon^2}{2} \int^{-\varepsilon^\alpha}_{-1}(O(h^{-1/2}))^2 dh\\
&\leq O(\varepsilon^2)+O\left(\varepsilon^2 \ln\left(\frac{1}{\varepsilon}\right)\right)\\
&=O\left(\varepsilon^2 \ln\left(\frac{1}{\varepsilon}\right)\right).
\end{align}

Finally, note that
\[
\tilde{E}(u_0,\bar{v}_0)-\tilde{E}(u_0,v_0)\leq C|I_\varepsilon|,
\]
so by taking $\alpha=2$, we easily obtain the desired estimates:
\[
\tilde{E}({u}_0,{v}_0)\leq E_\varepsilon(u_\varepsilon,v_\varepsilon)\leq \tilde{E}(u_0,v_0)+O\left(\varepsilon^2{{\ln\frac{1}{\varepsilon}}}\right).
\]
\end{proof}
\begin{proof}[Proof of the Corollary \ref{Corollary}]
First notice that
\[
\tilde{E}(u_0,v_0) \leq \tilde{E}_\varepsilon(u_\varepsilon,v_\varepsilon),
\]
then
\[
 \tilde{E}(u_0,v_0)+\varepsilon^2\int \frac{(v_\varepsilon')^2}{2} \leq \tilde{E}_\varepsilon(u_\varepsilon,v_\varepsilon)+\varepsilon^2\int \frac{(v_\varepsilon')^2}{2}.
\]
From Theorem \ref{chainofinequalities} we can have an estimate for the previous and so we get:
\[
 \tilde{E}(u_0,v_0)+\varepsilon^2\int \frac{(v_\varepsilon')^2}{2} \leq \tilde{E}_\varepsilon(u_\varepsilon,v_\varepsilon)+\varepsilon^2\int \frac{(v_\varepsilon')^2}{2}\leq \tilde{E}(u_0,v_0)+O\left(\varepsilon^2{{\ln\frac{1}{\varepsilon}}}\right),
\]
finally, by transitivity we get
\[
\tilde{E}(u_0,v_0)+\varepsilon^2\int \frac{(v_\varepsilon')^2}{2}\leq \tilde{E}(u_0,v_0)+O\left(\varepsilon^2{{\ln\frac{1}{\varepsilon}}}\right),
\]
and so we obtain
\[
 \varepsilon^2\int \frac{(v_\varepsilon')^2}{2}= O\left(\varepsilon^2{{\ln\frac{1}{\varepsilon}}}\right).
\]

\end{proof}

Now we present the main results of the section.
\begin{theorem}
For any sequence of $\varepsilon \to 0$ there exists a sub-sequence such that $(u_\varepsilon,v_\varepsilon) \to (\tilde{u},\tilde{v})$ pointwise in all $\mathbb{R}$, and in $C^{1,\alpha}(-R,R)$ for $u_\varepsilon$ for all $R>0$, $\alpha\in (0,1/2)$, where $(\tilde{u},\tilde{v})$ is the unique solution of
\[
\begin{cases}
\tilde{u}''+&\tilde{u}(1-\tilde{u}^2-\mu \tilde{v}^2)=0,\\
&\tilde{v}(1-\tilde{v}^2-\mu \tilde{u}^2)=0,
\end{cases}
\]
that satisfies $(\tilde{u},\tilde{v})\to (1,0)$ as $x\to +\infty$, and $(\tilde{u},\tilde{v})\to (0,1)$ as $x\to -\infty$, and minimizes $\tilde{E}(\cdot,\cdot)$.
\begin{proof}
We know that $\|u_\varepsilon \|_{L^\infty}\leq 1$ and $\|v_\varepsilon \|_{L^\infty} \leq 1$ for all values of $\varepsilon$. Given that they are solutions of the second order system we get that $\|u''_\varepsilon \|_{L^\infty}$ have uniform bounds for all values of $\varepsilon$, then it is easy to see that
\[
\|u_\varepsilon\|_{H^2(-R,R)} \leq C(R), \, \|v_\varepsilon\|_{L^\infty(-R,R)} \leq C(R).
\]
By Rellich–Kondrachov theorem and Helly's selection theorem, we obtain that there exists a sub-sequence of $\varepsilon$ such that
\[
(u_\varepsilon,v_\varepsilon)\xrightarrow{\varepsilon\to 0} (\tilde{u},\tilde{v}),
\]
where the limit is in $C^{1,\alpha}_{loc}$ for any $\alpha \in (0,1/2)$ for $u_\varepsilon$, and pointwise for $v_\varepsilon$.
Multiplying both equations of the system by some smooth compact support function and using dominated convergence theorem we get that necessarily $(\tilde{u},\tilde{v})$ satisfies
\[
\begin{cases}
\tilde{u}''+&\tilde{u}(1-\tilde{u}^2-\mu \tilde{v}^2)=0\\
&\tilde{v}(1-\tilde{v}^2-\mu \tilde{u}^2)=0.
\end{cases}
\]
Now, let us see that $(\tilde{u},\tilde{v})$ minimizes $\tilde{E}$ and as a consequence it is a translation  of $(u_0,v_0)$.

Given that
\[
\tilde{E}(u,v):=\int_{-\infty}^{\infty} \frac{(u')^2}{2}+\frac14 (u^2+v^2-1)^2+\frac{(\mu-1)}{2}u^2 v^2,
\]
is a non-negative functional we can make use of Fatou's lemma. In this case we obtain
\[
\tilde{E}(\tilde{u},\tilde{v}) \leq \liminf_{\varepsilon \to 0} \tilde{E}(u_\varepsilon, v_\varepsilon).
\]
Let us remember that we also have
\[
 \tilde{E}(u_\varepsilon, v_\varepsilon) \leq  \tilde{E}(u_0, v_0)+O\left(\varepsilon^2{{\ln \frac{1}{\varepsilon}}}\right),
\]
which, together with the previous inequality we conclude
\[
\tilde{E}(\tilde{u},\tilde{v}) \leq \liminf_{\varepsilon \to 0} \tilde{E}(u_0, v_0)+O\left(\varepsilon^2{{\ln \frac{1}{\varepsilon}}}\right),
\]
and so, necessarily $\tilde{E}(\tilde{u},\tilde{v}) \leq \tilde{E}(u_0, v_0)$.

For the proof of the corresponding limits at infinity of $(\tilde{u}, \tilde{v})$, first note that $\tilde{u}$, being the pointwise limit of a sequence of nonnegative, increasing functions, is also a nonnegative and increasing function. The same applies to $\tilde{v}$, which is the limit of a sequence of nonnegative, decreasing functions, and therefore is nonnegative and decreasing. Additionally, observe that the integral of $\tilde{E}(\tilde{u}, \tilde{v})$ is finite. Consequently, at infinity, the pair $(\tilde{u}, \tilde{v})$ must have either $(1, 0)$ or $(0, 1)$ as an accumulation point.

Suppose for the sake of contradiction that $\tilde{u}(x) \to L \neq 1$ as $x \to \infty$. The only remaining possibility is $L = 0$. Since $\tilde{u}$ is increasing and nonnegative, this would imply that $\tilde{u} = 0$, and thus $\tilde{v} = 1$. From the construction of $(u_\varepsilon, v_\varepsilon)$ in the proof of Theorem \ref{existenceproof}, we know that $u_\varepsilon(0) = v_\varepsilon(0)$ for all $\varepsilon$. Therefore, $\tilde{u}(0) = \tilde{v}(0)$, but we just proved that they are different constant functions, which leads to a contradiction. The same contradiction arises if we assume $\tilde{u}(x) \to 1$ as $x \to -\infty$, by monotonicity, this would imply that $\tilde{u} = 1$ and $\tilde{v} = 0$.

With the same argument, we obtain the corresponding limits for $\tilde{v}$. We conclude that $\tilde{E}(\tilde{u},\tilde{v}) = \tilde{E}(u_0, v_0)$, and thus, by Lemma \ref{u0v0teo}, $(\tilde{u}, \tilde{v})$ must be a displacement of $(u_0, v_0)$.
\end{proof}
\end{theorem}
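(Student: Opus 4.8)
The plan is to run a compactness-plus-minimality argument, using the energy sandwich of Lemma \ref{chainofinequalities} to trap the limit and the normalization built into the proof of Theorem \ref{existenceproof} to fix the behaviour at infinity. First I would collect the uniform estimates. The $L^\infty$ bounds $\|u_\varepsilon\|_{L^\infty},\|v_\varepsilon\|_{L^\infty}\le 1$ established above, inserted into the first equation of \eqref{system}, give $\|u_\varepsilon''\|_{L^\infty}\le C$ uniformly in $\varepsilon$, hence $\|u_\varepsilon\|_{H^2(-R,R)}\le C(R)$ for every $R>0$. By the compact embedding $H^2(-R,R)\hookrightarrow C^{1,\alpha}(-R,R)$, valid for every $\alpha\in(0,1/2)$, together with a diagonal argument over $R=1,2,\dots$, a subsequence of $u_\varepsilon$ converges in $C^{1,\alpha}_{loc}$ to some $\tilde u$. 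Since $v_\varepsilon$ is monotone and uniformly bounded, Helly's selection theorem extracts a further subsequence converging pointwise on all of $\mathbb{R}$ to a monotone limit $\tilde v$.

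Next I would pass to the limit in both equations. The first equation passes directly: the $C^{1,\alpha}_{loc}$ convergence of $u_\varepsilon$ and the dominated pointwise convergence of $v_\varepsilon$ allow one to identify $\tilde u''+\tilde u(1-\tilde u^2-\mu\tilde v^2)=0$ after testing against $\phi\in C_c^\infty$ and applying dominated convergence. The only delicate point is the singular term in the second equation: testing $\varepsilon^2 v_\varepsilon''+v_\varepsilon(1-v_\varepsilon^2-\mu u_\varepsilon^2)=0$ against $\phi\in C_c^\infty$ and integrating by parts twice produces $\varepsilon^2\int v_\varepsilon\,\phi''\,dx$, which is $O(\varepsilon^2)$ by the uniform $L^\infty$ bound on $v_\varepsilon$ and hence vanishes; the remaining term converges by dominated convergence, leaving the algebraic constraint $\tilde v(1-\tilde v^2-\mu\tilde u^2)=0$.

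To see that $(\tilde u,\tilde v)$ minimizes $\tilde E$, I would combine Fatou's lemma with the sandwich. Since $\tilde E$ drops the nonnegative term $\varepsilon^2(v')^2/2$, we have $\tilde E(u_\varepsilon,v_\varepsilon)\le E_\varepsilon(u_\varepsilon,v_\varepsilon)$, and the nonnegative integrand of $\tilde E(u_\varepsilon,v_\varepsilon)$ converges pointwise a.e.\ (using $u_\varepsilon'\to\tilde u'$ locally uniformly from the $C^{1,\alpha}$ convergence). Fatou then yields
\[
\tilde E(\tilde u,\tilde v)\le\liminf_{\varepsilon\to 0}\tilde E(u_\varepsilon,v_\varepsilon)\le\liminf_{\varepsilon\to0}\Big(\tilde E(u_0,v_0)+O\big(\varepsilon^2\ln\tfrac1\varepsilon\big)\Big)=\tilde E(u_0,v_0),
\]
so $(\tilde u,\tilde v)$ attains the infimum $\tilde E(u_0,v_0)$.

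The \emph{main obstacle}, and the step I would treat most carefully, is recovering the conditions at infinity, since a priori the limit could degenerate into a constant pair. Here $\tilde u$ is nonnegative and nondecreasing and $\tilde v$ is nonnegative and nonincreasing (each a pointwise limit of functions with these properties), while finiteness of $\tilde E(\tilde u,\tilde v)$ forces $W(\tilde u,\tilde v)\to0$ at $\pm\infty$, so the only admissible limits are the wells $(1,0)$ and $(0,1)$. The normalization $u_\varepsilon(0)=v_\varepsilon(0)$ from the proof of Theorem \ref{existenceproof} passes to $\tilde u(0)=\tilde v(0)$. If $\tilde u\to0$ at $+\infty$, then monotonicity forces $\tilde u\equiv0$, whence $\tilde v\equiv1$, contradicting $\tilde u(0)=\tilde v(0)$; the symmetric argument excludes the wrong limit at $-\infty$. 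Hence $(\tilde u,\tilde v)\to(1,0)$ as $x\to+\infty$ and $(\tilde u,\tilde v)\to(0,1)$ as $x\to-\infty$. With the correct boundary data and minimality in hand, Lemma \ref{u0v0teo} identifies $(\tilde u,\tilde v)$ as a translate of $(u_0,v_0)$, completing the proof.
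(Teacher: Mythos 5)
Your proposal is correct and follows essentially the same route as the paper: uniform $H^2$ bounds plus Rellich--Kondrachov for $u_\varepsilon$, Helly's theorem for the monotone $v_\varepsilon$, passage to the limit in the weak formulation, the Fatou/energy-sandwich argument for minimality, and the normalization $u_\varepsilon(0)=v_\varepsilon(0)$ combined with monotonicity to rule out degenerate constant limits and recover the conditions at infinity. If anything, you are slightly more explicit than the paper at one point the paper glosses over, namely integrating by parts twice to show the singular term $\varepsilon^2\int v_\varepsilon\,\phi''\,dx$ is $O(\varepsilon^2)$ and hence vanishes, leaving the algebraic constraint on $\tilde v$.
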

\section{The Painlev\'e equation as a singular limit}
\label{PainleveProblem}
Now we already have solutions for the system for all values of $\varepsilon$
\[
\begin{aligned}
u''+u(1-u^2-\mu v^2)&=0,\\
\varepsilon^2 v'' +v(1-v^2-\mu u^2)&=0.
\end{aligned}
\]
Let us consider the change of variables:
\[
\phi_\varepsilon(t)=\varepsilon^{-1/3} v_\varepsilon(z_\varepsilon+\varepsilon^{2/3}t),
\]
where, given the monotonicity of $u_\varepsilon$ \cite{Stan},  $z_\varepsilon$ is the unique value  (dependent of $\varepsilon$) such that $u_\varepsilon(z_\varepsilon)=\frac{1}{\sqrt{\mu}}$.

The second equation of the system after some algebraic manipulations becomes
\[
 \phi''_\varepsilon- \phi_\varepsilon( \phi^2_\varepsilon+\mu (u_\varepsilon^2)'(z_\varepsilon)t)= \varepsilon^{-2/3} \phi_\varepsilon h_\varepsilon,
\]
where $h_\varepsilon$ is given by:
\[
h_\varepsilon=\left(-\mu\cdot \left((u^2_\varepsilon(z_\varepsilon+\varepsilon^{2/3} t)-u^2_\varepsilon(x))-  (u_\varepsilon^2)'(z_\varepsilon) \varepsilon^{2/3}t \right) \right).
\]

We know that for all values of $\varepsilon$, the function $u_\varepsilon$ is differentiable and has smooth derivative, we have
\[
u^2_\varepsilon(z_\varepsilon+\varepsilon^{2/3} t)-u_\varepsilon^2(z_\varepsilon)=(u_\varepsilon^2)'(z_\varepsilon)\varepsilon^{2/3}t+o(\varepsilon^{2/3} t),
\]
hence
\[
\phi''_\varepsilon- \phi_\varepsilon( \phi^2_\varepsilon+\mu (u_\varepsilon^2)'(z_\varepsilon)t)=-\mu \varepsilon^{-2/3} \phi_\varepsilon \cdot o(\varepsilon^{2/3} t).
\]
\begin{lemma}
For any value of $R>0$, the family of functions $\{\phi_\varepsilon \}_\varepsilon$ is  bounded  in $L^\infty(-R,R)$.
\begin{proof} Using the arguments presented in Lemma 3.1 from \cite{shadowkink}, we are able to obtain local uniform estimates for $\phi_\varepsilon$. In their work, the authors derive a uniform estimate for solutions of a similar equation which, when taking the constant $a=0$, becomes:
\[
\varepsilon^2 v'' + v(\tilde{\mu}(x) - v^2) = 0,
\]
where $\tilde{\mu}$ is an even bump function with zeros at $\pm \xi$. In our case, if we consider $\tilde{\mu}(x) := 1 - \mu u_\varepsilon^2$, we can employ the same techniques applied in their work (with simplifications, since our function $\tilde{\mu}$ is decreasing and has only one zero, so only one of the two cases they treat must be considered) to obtain the desired estimate:
\[
v(z_\varepsilon + t\varepsilon^{2/3}) \leq \sqrt{8\lambda \left[1 - \mu u_\varepsilon(z_\varepsilon + t\varepsilon^{2/3}) \right]}, \quad \text{for all } t\in (-\infty, +\infty),
\]
where $\lambda > 1$ is some real number. From this, we directly obtain that:
\[
\frac{v(z_\varepsilon + t\varepsilon^{2/3})}{\varepsilon^{1/3}} = o(1),
\]
and so the proof is finished.
\end{proof}
\end{lemma}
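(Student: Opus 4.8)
The plan is to translate the claimed $L^\infty$ bound on $\phi_\varepsilon$ into a pointwise estimate of corner-layer type for $v_\varepsilon$ and then read it off through the scaling. First I would record the second equation of \eqref{system} in the scalar form
\[
\varepsilon^2 v_\varepsilon'' + v_\varepsilon\bigl(\tilde\mu_\varepsilon - v_\varepsilon^2\bigr)=0,\qquad \tilde\mu_\varepsilon(x):=1-\mu\,u_\varepsilon^2(x).
\]
By the corollary $0\le u_\varepsilon\le 1$ and the monotonicity theorem, $\tilde\mu_\varepsilon$ is smooth and strictly decreasing, and it vanishes exactly at $z_\varepsilon$, where $u_\varepsilon(z_\varepsilon)=1/\sqrt\mu$; moreover $v_\varepsilon>0$ on all of $\mathbb{R}$. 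This is precisely the single-zero, $a=0$ instance of the coefficient treated in Lemma~3.1 of \cite{shadowkink}, and the whole problem reduces to showing $v_\varepsilon = O(\varepsilon^{1/3})$ uniformly on the set $\{z_\varepsilon+\varepsilon^{2/3}t:\ |t|\le R\}$.

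The estimate splits according to the sign of $\tilde\mu_\varepsilon$. To the left of $z_\varepsilon$, where $\tilde\mu_\varepsilon>0$, I would use the supersolution $\psi_\varepsilon:=\sqrt{8\lambda\,(\tilde\mu_\varepsilon)_+}$ with a fixed $\lambda>1$: since $\psi_\varepsilon\tilde\mu_\varepsilon-\psi_\varepsilon^3=(1-8\lambda)\,\psi_\varepsilon\tilde\mu_\varepsilon$ is strictly negative, the choice $\lambda>1$ leaves room to absorb the diffusive term $\varepsilon^2\psi_\varepsilon''$, and a comparison argument at an interior maximum of $v_\varepsilon-\psi_\varepsilon$ yields $v_\varepsilon\le\psi_\varepsilon$ away from the immediate layer; the dedicated corner-layer estimate of \cite{shadowkink} then controls $v_\varepsilon$ on the remaining $O(\varepsilon^{2/3})$-neighbourhood of $z_\varepsilon$. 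To the right of $z_\varepsilon$ the analysis simplifies, which is the benefit of our monotone, single-zero coefficient: there $\tilde\mu_\varepsilon\le 0$, so $\varepsilon^2 v_\varepsilon''=v_\varepsilon(v_\varepsilon^2-\tilde\mu_\varepsilon)\ge 0$, i.e. $v_\varepsilon$ is convex on $[z_\varepsilon,\infty)$; being positive, convex and tending to $0$ at $+\infty$, it is nonincreasing there, so $v_\varepsilon(x)\le v_\varepsilon(z_\varepsilon)$ for all $x\ge z_\varepsilon$, and $v_\varepsilon(z_\varepsilon)$ is bounded by the left-hand estimate through continuity.

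Granting the bound $v_\varepsilon\le C\bigl(\varepsilon^{1/3}+\sqrt{(\tilde\mu_\varepsilon)_+}\bigr)$, the conclusion is a one-line scaling computation. The minimizers enjoy the uniform $L^\infty$ and $H^2_{loc}$ (hence $C^1_{loc}$) bounds established in the previous section, so $(u_\varepsilon^2)'=2u_\varepsilon u_\varepsilon'$ is bounded independently of $\varepsilon$, and Taylor expansion about $z_\varepsilon$, using $\tilde\mu_\varepsilon(z_\varepsilon)=0$, gives
\[
\tilde\mu_\varepsilon\bigl(z_\varepsilon+\varepsilon^{2/3}t\bigr)=-\mu\,(u_\varepsilon^2)'(z_\varepsilon)\,\varepsilon^{2/3}t+O\!\bigl(\varepsilon^{4/3}t^2\bigr)=O\!\bigl(\varepsilon^{2/3}\bigr)
\]
uniformly for $|t|\le R$. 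Hence $v_\varepsilon(z_\varepsilon+\varepsilon^{2/3}t)\le C\varepsilon^{1/3}$, and therefore $\phi_\varepsilon(t)=\varepsilon^{-1/3}v_\varepsilon(z_\varepsilon+\varepsilon^{2/3}t)$ is bounded by a constant independent of $\varepsilon$ on $(-R,R)$, which is the assertion.

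The main obstacle is the barrier construction through the corner, which is exactly the part imported from \cite{shadowkink}. Away from $z_\varepsilon$ the outer profile $\sqrt{\tilde\mu_\varepsilon}$ is harmless, but as $x\to z_\varepsilon$ the second derivative $\varepsilon^2\psi_\varepsilon''$ develops a singularity of order $\varepsilon^2(\tilde\mu_\varepsilon)^{-3/2}|\tilde\mu_\varepsilon'|^2$, and it is only because the coefficient vanishes linearly, so that the relevant region has width $O(\varepsilon^{2/3})$ on which $\tilde\mu_\varepsilon=O(\varepsilon^{2/3})$, that this error balances the favourable reaction term. Making this balance uniform in $\varepsilon$ — equivalently, matching the outer supersolution to the inner layer where $v_\varepsilon\sim\varepsilon^{1/3}$ — is the delicate point; everything else is the elementary scaling above.
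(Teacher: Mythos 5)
Your proposal follows essentially the same route as the paper: both reduce the lemma to the corner-layer barrier estimate of Lemma 3.1 in \cite{shadowkink}, applied with $\tilde\mu_\varepsilon=1-\mu u_\varepsilon^2$ (monotone, single zero at $z_\varepsilon$, simplifying the two-zero case treated there), and then conclude by the scaling observation that $\tilde\mu_\varepsilon(z_\varepsilon+\varepsilon^{2/3}t)=O(\varepsilon^{2/3})$ uniformly for $|t|\le R$. If anything, your write-up is more careful than the paper's: your bound $v_\varepsilon\le C\bigl(\varepsilon^{1/3}+\sqrt{(\tilde\mu_\varepsilon)_+}\bigr)$ correctly carries the positive part and the additive layer term (so it remains meaningful to the right of $z_\varepsilon$, where the quantity under the paper's stated square root is negative), and you correctly deduce $\phi_\varepsilon=O(1)$, which is what the lemma asserts, rather than the paper's overstated $o(1)$.
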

\begin{lemma}
For any value of $R>0$ we have that $h_\varepsilon|_{(-R,R)}$ is $o(\varepsilon^{2/3})$.
\begin{proof}
Clearly, given that $u_\varepsilon$ is differentiable on $\mathbb{R}$, we have that $u_\varepsilon^2$ also is differentiable on $\mathbb{R}$, and so
\[
(u^2_\varepsilon(z_\varepsilon+\varepsilon^{2/3} t)-u^2_\varepsilon(z_\varepsilon))-  (u_\varepsilon^2)'(z_\varepsilon) \varepsilon^{2/3}t = o(\varepsilon^{2/3}t).
\]
Then, $h_\varepsilon(t)=o(\varepsilon^{2/3} t)$. Now if $t$ is bounded, the result is trivial.
\end{proof}
\end{lemma}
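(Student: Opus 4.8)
The plan is to replace the purely qualitative expansion appearing in the statement by a \emph{quantitative} Taylor remainder that is uniform in $\varepsilon$. The subtlety is that both the base point $z_\varepsilon$ and the profile $u_\varepsilon$ move with $\varepsilon$, so the symbol $o(\varepsilon^{2/3}t)$ is only meaningful once one controls the second derivative of $u_\varepsilon^2$ by a bound \emph{independent of} $\varepsilon$. Everything hinges on upgrading the pointwise differentiability used in the cited argument to a uniform $C^2$ estimate.

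First I would collect uniform $C^2$ bounds on $u_\varepsilon$ valid on all of $\mathbb{R}$. The earlier corollary gives $\|u_\varepsilon\|_{L^\infty}\le 1$, $\|v_\varepsilon\|_{L^\infty}\le 1$, together with $u_\varepsilon^2+v_\varepsilon^2\le 1$. Substituting this into the first equation of the system, $u_\varepsilon''=-u_\varepsilon(1-u_\varepsilon^2-\mu v_\varepsilon^2)$, the right-hand side is bounded by a constant independent of $\varepsilon$, so $\|u_\varepsilon''\|_{L^\infty}\le C$ uniformly. A Landau--Kolmogorov interpolation inequality on the line,
\[
\|u_\varepsilon'\|_{L^\infty}^2\le 2\,\|u_\varepsilon\|_{L^\infty}\,\|u_\varepsilon''\|_{L^\infty},
\]
then produces a uniform bound on $\|u_\varepsilon'\|_{L^\infty}$ as well. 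Because these estimates hold globally, they do not depend on the (possibly drifting) location of $z_\varepsilon$.

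Differentiating twice, $(u_\varepsilon^2)''=2(u_\varepsilon')^2+2\,u_\varepsilon u_\varepsilon''$, the bounds of the previous step furnish a constant $M$, independent of $\varepsilon$, with $\|(u_\varepsilon^2)''\|_{L^\infty}\le M$. With this in hand, Taylor's theorem with the Lagrange remainder, applied to the $C^2$ function $u_\varepsilon^2$ at the base point $z_\varepsilon$ with increment $s=\varepsilon^{2/3}t$, yields
\[
\bigl| u_\varepsilon^2(z_\varepsilon+\varepsilon^{2/3}t)-u_\varepsilon^2(z_\varepsilon)-(u_\varepsilon^2)'(z_\varepsilon)\,\varepsilon^{2/3}t \bigr|\le \tfrac12 M\,(\varepsilon^{2/3}t)^2.
\]
Restricting to $t\in(-R,R)$, so $|t|\le R$, the right-hand side is at most $\tfrac12 M R^2\varepsilon^{4/3}$, whence $|h_\varepsilon(t)|\le \tfrac12\mu M R^2\varepsilon^{4/3}$ uniformly on $(-R,R)$.

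Finally, since $\varepsilon^{4/3}/\varepsilon^{2/3}=\varepsilon^{2/3}\to 0$ as $\varepsilon\to 0$, the estimate $h_\varepsilon=O(\varepsilon^{4/3})$ is in fact $o(\varepsilon^{2/3})$, uniformly on every interval $(-R,R)$, which is the claim. The hard part is exactly this uniformity in $\varepsilon$: the terse ``$u_\varepsilon$ is differentiable'' remark only delivers a pointwise Taylor expansion for each fixed $\varepsilon$, and the error could in principle degrade as $\varepsilon$ varies. The Landau--Kolmogorov step, which converts the global $L^\infty$ bounds on $u_\varepsilon$ and $u_\varepsilon''$ into a uniform gradient bound, is what closes this gap; after that the conclusion is a routine quantitative Taylor estimate.
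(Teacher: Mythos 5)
Your proof is correct, and it takes a genuinely different route from the paper's --- in fact a more careful one. The paper's own proof is a one-liner: it invokes only pointwise differentiability of $u_\varepsilon^2$ at $z_\varepsilon$, writes the Peano-remainder expansion to claim $h_\varepsilon(t)=o(\varepsilon^{2/3}t)$, and restricts to bounded $t$. As you observe, this leaves the crucial uniformity implicit: for each fixed $\varepsilon$, differentiability only gives a remainder that is $o(s)$ as $s\to 0$, with a modulus that could a priori degrade as $\varepsilon\to 0$, since both the profile $u_\varepsilon$ and the base point $z_\varepsilon$ move with $\varepsilon$. Your argument supplies exactly the missing uniformity: the bounds $\|u_\varepsilon\|_{L^\infty},\|v_\varepsilon\|_{L^\infty}\le 1$ together with the first equation give $\|u_\varepsilon''\|_{L^\infty}\le C(\mu)$ independently of $\varepsilon$; the Landau--Kolmogorov inequality $\|u_\varepsilon'\|_{L^\infty}^2\le 2\|u_\varepsilon\|_{L^\infty}\|u_\varepsilon''\|_{L^\infty}$ then bounds $\|u_\varepsilon'\|_{L^\infty}$ uniformly; hence $\|(u_\varepsilon^2)''\|_{L^\infty}\le M$ with $M$ independent of $\varepsilon$, and Taylor's theorem with Lagrange remainder yields the quantitative bound $|h_\varepsilon(t)|\le \tfrac12\mu M R^2\varepsilon^{4/3}$ on $(-R,R)$. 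This gives $h_\varepsilon=O(\varepsilon^{4/3})$, stronger than the stated $o(\varepsilon^{2/3})$, uniformly on compact $t$-intervals and independently of where $z_\varepsilon$ sits on the line. What your route buys is the uniformity in $\varepsilon$ that the lemma's conclusion actually requires, plus an explicit rate; what the paper's route buys is brevity, at the price of a gap that your interpolation step is precisely designed to close. An alternative repair closer to the paper's spirit would be to invoke the uniform local $H^2$ (hence $C^{1,1/2}$) bounds on $u_\varepsilon$ from Section 5, but that would additionally require knowing that $z_\varepsilon$ stays in a fixed compact set, an issue your global estimates bypass entirely.
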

\begin{theorem} For any sequence of $\varepsilon \to 0$, there exists a subsequence such that $\phi_\varepsilon \to {\phi}_0$, pointwise in $\mathbb{R}$. This limit function is a solution of the problem
\begin{equation}
\label{eq:painleve}
{\phi}''-{\phi}({\phi}^2+\mu (u_0^2)'(z) t)=0,\quad  \forall t.
\end{equation}
\begin{proof}
Let us consider some $R>0$. We have proved that $\|\phi_\varepsilon\|_{L^\infty(-R,R)}$ is uniformly bounded in $\varepsilon$.

By Helly's selection theorem, there exists a convergent subsequence. Thus, we can construct a pointwise convergent subsequence to some $\phi_0$ on the whole real line.

Now, $\phi_\varepsilon$ is a solution of the equation
\[
 \phi''_\varepsilon- \phi_\varepsilon( \phi^2_\varepsilon+\mu (u_\varepsilon^2)'(z_\varepsilon)t)= \varepsilon^{-2/3} \phi_\varepsilon {h_\varepsilon}.
\]
If we multiply both sides of the previous equation by a compactly supported function $\varphi$ and integrate, we obtain
\[
 \int \phi_\varepsilon \varphi'' - \phi^3_\varepsilon \varphi+\mu (u_\varepsilon^2)'(z_\varepsilon)\varphi t dt= \int \varepsilon^{-2/3} \phi_\varepsilon h_\varepsilon \varphi.
\]
Given that $\varphi$ has compact support, say $K \subset \mathbb{R}$, we have that $(\phi_\varepsilon)|_{K}\leq C_K$. By dominated convergence theorem, the left-hand side of the above equation converges along a subsequence since the sequence of $(\phi_\varepsilon)_\varepsilon$ does. For the right-hand side, we have already shown $\frac{h_\varepsilon}{\varepsilon^{2/3}}\to 0$ as $\varepsilon \to 0$. It follows that $\varepsilon^{-2/3} \phi_\varepsilon h_\varepsilon\to 0$ as $\varepsilon \to 0$. Thus, by the dominated convergence theorem, we conclude that $\phi_0$ solves in the weak sense the equation (\ref{eq:painleve}).
\end{proof}
\end{theorem}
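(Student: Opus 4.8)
The plan is to extract a pointwise limit by a soft compactness argument and then pass to the limit in the weak form of the rescaled equation, using the two preceding lemmas to control the cubic term and the remainder, and invoking the $C^{1,\alpha}_{loc}$ convergence of the previous section to handle the variable coefficient.

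\textbf{Step 1 (compactness and the pointwise limit).} Because $v_\varepsilon$ is monotone decreasing (by the monotonicity theorem recalled above), each rescaled function $\phi_\varepsilon(t)=\varepsilon^{-1/3}v_\varepsilon(z_\varepsilon+\varepsilon^{2/3}t)$ is monotone in $t$; by the first lemma the family $\{\phi_\varepsilon\}$ is uniformly bounded on every interval $(-R,R)$. Helly's selection theorem applied to these monotone uniformly bounded functions yields, on each $(-R,R)$, a pointwise convergent subsequence, and a diagonal argument over $R=1,2,3,\dots$ produces a single subsequence (still denoted $\phi_\varepsilon$) converging pointwise on all of $\mathbb{R}$ to some $\phi_0$. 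I would take this subsequence to also realize the $C^{1,\alpha}_{loc}$ convergence $u_\varepsilon\to\tilde u$ from the earlier theorem, so that all limits below are taken along a common subsequence.

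\textbf{Step 2 (passage to the weak limit for the soft terms).} Fix $\varphi\in C_c^\infty(\mathbb{R})$ with support $K$. Multiplying the rescaled equation by $\varphi$ and integrating by parts twice on the second-order term gives
\[
\int \phi_\varepsilon\varphi''-\phi_\varepsilon^3\varphi+\mu\,(u_\varepsilon^2)'(z_\varepsilon)\,t\,\varphi\,dt=\int \varepsilon^{-2/3}\phi_\varepsilon h_\varepsilon\,\varphi\,dt.
\]
On $K$ the first lemma gives a uniform bound $|\phi_\varepsilon|\le C_K$, so $\phi_\varepsilon$ and $\phi_\varepsilon^3$ are dominated and converge pointwise to $\phi_0$ and $\phi_0^3$, and the dominated convergence theorem handles the first two terms. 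For the right-hand side, the second lemma gives $\varepsilon^{-2/3}h_\varepsilon\to0$ uniformly on $K$, so the integrand $\varepsilon^{-2/3}\phi_\varepsilon h_\varepsilon\varphi$ is dominated and tends to $0$. The only nontrivial term left is the coefficient $\mu\,(u_\varepsilon^2)'(z_\varepsilon)$, which is constant in $t$ and multiplies the fixed integral $\int t\varphi\,dt$.

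\textbf{Step 3 (convergence of the coefficient --- the main obstacle).} Here the previous section is essential. After fixing the translation so that the limit $(\tilde u,\tilde v)$ of the earlier theorem is exactly $(u_0,v_0)$, the $C^{1,\alpha}_{loc}$ convergence gives $u_\varepsilon\to u_0$ and $u_\varepsilon'\to u_0'$ locally uniformly. Since $u_0$ is strictly increasing and crosses $1/\sqrt\mu$ transversally at its corner point $z$ (one checks $u_0''$ is continuous there, so $u_0'(z)>0$ is well defined), while $u_\varepsilon(z_\varepsilon)=1/\sqrt\mu$ with $u_\varepsilon$ strictly increasing, local uniform convergence forces $z_\varepsilon\to z$. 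Hence
\[
(u_\varepsilon^2)'(z_\varepsilon)=2u_\varepsilon(z_\varepsilon)u_\varepsilon'(z_\varepsilon)=\tfrac{2}{\sqrt\mu}\,u_\varepsilon'(z_\varepsilon)\longrightarrow \tfrac{2}{\sqrt\mu}\,u_0'(z)=(u_0^2)'(z).
\]
Substituting into Step 2 and letting $\varepsilon\to0$ gives $\int \phi_0\varphi''-\phi_0^3\varphi+\mu(u_0^2)'(z)\,t\,\varphi\,dt=0$ for every $\varphi\in C_c^\infty(\mathbb{R})$, i.e. $\phi_0$ is a weak solution of \eqref{eq:painleve} (and, being bounded with a polynomial right-hand side, a classical one by ODE regularity). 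I expect this coefficient convergence to be the genuine difficulty: unlike the nonlinear and remainder terms, which vanish by soft domination, $(u_\varepsilon^2)'(z_\varepsilon)$ is a derivative evaluated at a \emph{moving} point, so it requires both the $C^1$-in-$x$ convergence of $u_\varepsilon$ and the transversality that yields $z_\varepsilon\to z$; neither pointwise nor $L^\infty$ control of $u_\varepsilon$ alone would suffice.
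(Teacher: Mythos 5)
Your proposal is correct and follows the same backbone as the paper's own proof: uniform local bounds plus Helly's selection theorem (your appeal to monotonicity of $v_\varepsilon$, hence of $\phi_\varepsilon$, makes explicit the hypothesis Helly actually needs, which the paper leaves implicit), then testing the rescaled equation against $\varphi\in C_c^\infty$, dominated convergence for the $\phi_\varepsilon$ and $\phi_\varepsilon^3$ terms, and the remainder lemma to kill $\varepsilon^{-2/3}\phi_\varepsilon h_\varepsilon$. Where you genuinely go beyond the paper is your Step 3: the paper's proof never addresses why the coefficient $(u_\varepsilon^2)'(z_\varepsilon)$, a derivative evaluated at a moving point, converges to $(u_0^2)'(z)$ --- it simply folds this into ``the left-hand side converges by dominated convergence,'' which it does not, since this is not an integrand issue at all. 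Your argument (fix the translation of the limit, use $C^{1,\alpha}_{loc}$ convergence of $u_\varepsilon$, deduce $z_\varepsilon\to z$ from strict monotonicity and $u_0'(z)>0$, then pass to the limit in $2u_\varepsilon(z_\varepsilon)u_\varepsilon'(z_\varepsilon)$) is exactly the missing ingredient, and one can check directly from \eqref{u_epsilon} that $u_0'$ matches across the corner with $u_0'(z)=\frac{\mu-1}{\sqrt{2}\,\mu}>0$, so the transversality you invoke holds. One caveat: both you and the paper write the weak formulation with the term $\mu\,(u_\varepsilon^2)'(z_\varepsilon)\,t\,\varphi$, omitting the factor $\phi_\varepsilon$; consequently your remark that this coefficient ``multiplies the fixed integral $\int t\varphi\,dt$'' is not quite right. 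The correct term is $\mu\,(u_\varepsilon^2)'(z_\varepsilon)\int t\,\phi_\varepsilon\varphi\,dt$, but this is harmless for your argument: your Step 2 domination gives $\int t\,\phi_\varepsilon\varphi\,dt\to\int t\,\phi_0\varphi\,dt$ and your Step 3 gives convergence of the scalar prefactor, so the product converges and the conclusion stands.
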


\medskip
\medskip

\end{document}